\newcommand{\Q}{\mathbb{Q}}
\newcommand{\R}{\mathbb{R}}
\newcommand{\HH}{\mathbb{H}}
\newcommand{\G}{\mathbf{G}}
\newcommand{\OOO}{\mathcal{O}}
\newcommand{\id}{\mathrm{Id}}
\newcommand{\Cyc}{\mathrm{Cyc}}
\newtheorem{theorem}{Theorem}[section]
\newtheorem{corollary}{Corollary}[theorem]
\newtheorem{lemma}[theorem]{Lemma}
\newtheorem{question}[theorem]{Question}
\newtheorem{observation}[theorem]{Observation}
\theoremstyle{remark}
\newtheorem{remark}[theorem]{Remark}
\theoremstyle{remark}
\renewcommand{\qed}{\hfill$\scriptstyle\blacksquare$}
\title{Geometric and arithmetic properties of Löbell polyhedra}
\author{Nikolay Bogachev}
\address{Department of Mathematics, University of Toronto, 40 St George Street, Toronto ON, M5S~2E4, Canada}
\address{Institute for Information Transmission Problems, Moscow, Russia}
\email{nvbogach@mail.ru}
\author{Sami Douba}
\address{Institut des Hautes Études Scientifiques, 35 route de Chartres, 91440 Bures-sur-Yvette, France}
\email{douba@ihes.fr}
\begin{document}

\setcounter{tocdepth}{1}

\begin{abstract}
The L\"obell polyhedra form an infinite family of compact right-angled hyperbolic polyhedra in dimension $3$.
We observe, through both elementary and more conceptual means, that the ``systoles'' of the L\"obell polyhedra approach $0$, so that these polyhedra give rise to particularly straightforward examples of closed hyperbolic $3$-manifolds with arbitrarily small systole, and constitute an infinite family even up to commensurability. By computing number theoretic invariants of these polyhedra, we refine the latter result, and also determine precisely which of the L\"obell polyhedra are quasi-arithmetic.
\end{abstract}

\subjclass[2020]{}

\maketitle

\section{Introduction}

For each $n \geqslant 5$, consider a combinatorial $3$-polyhedron whose ``top'' and ``bottom'' faces are $n$-gons and whose ``lateral" surface consists of $2n$ pentagons (see Figure~\ref{fig:Lobell-6} and Figure~\ref{fig:Lobell}, left, for illustrations of the case $n=6$). By Andreev's theorem \cite{And70, MR2336832}, this abstract polyhedron can be realized as a compact right-angled polyhedron $L_n$ in $\mathbb{H}^3$ for any $n \geqslant 5$. For instance, the polyhedron $L_5$ is the right-angled hyperbolic dodecahedron.

The polyhedra $L_n$ are called {\em L\"obell polyhedra}, after F.\,R.~Löbell, who constructed   the first example \cite{lobell} of a closed oriented hyperbolic $3$-manifold by gluing eight copies of the polyhedron $L_6$. The subgroup  $\Gamma_n < \mathrm{Isom}(\mathbb{H}^3)$ generated by the reflections in the faces of $L_n$ is a cocompact right-angled reflection group; we refer to the quotients $\mathbb{H}^3/\Gamma_n$ as {\em L\"obell orbifolds}. Such an orbifold $\HH^3/\Gamma_n$ can be visualized as the polyhedron $L_n$ itself with reflective singularities in its faces. The example of L\"obell is a particular instance of a general construction of degree-$8$ manifold covers of right-angled reflection $3$-orbifolds; see \cite{MR924975} or \cite[Section~3.1]{MR3635439}.

Much is understood about L\"obell polyhedra. In particular, Vesnin \cite{MR1694014} computed their volumes $\mathrm{vol}(L_n)$, and showed that $\mathrm{vol}(L_n)$ is an increasing function of $n$. Moreover, Mednykh and Vesnin \cite{MR2465443} showed that the distance $\delta_n$ between the top and bottom faces of the L\"obell polyhedron $L_n$ satisfies
\begin{equation*}
\cosh \delta_n =  \frac{\cos\left( \frac{\pi}{n} \right)}{\cos \left( \frac{2\pi}{n} \right)}.
\end{equation*}
The following is an elementary consequence of the above identity.

\begin{observation}\label{ob:topandbottom}
The distance $\delta_n$ between the top and bottom faces of the L\"obell polyhedron $L_n$ approaches $0$ as $n \rightarrow \infty$.
\end{observation}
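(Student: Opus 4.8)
The plan is to read off the limit directly from the Mednykh--Vesnin identity, since it expresses $\cosh \delta_n$ in closed form. As $\cosh$ is a continuous, strictly increasing bijection from $[0,\infty)$ onto $[1,\infty)$---and $\delta_n \geqslant 0$, being a distance---it suffices to show that the right-hand side of the identity tends to $1$ as $n \to \infty$.

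First I would note that both arguments $\pi/n$ and $2\pi/n$ tend to $0$ as $n \to \infty$. By continuity of cosine at $0$, the numerator $\cos(\pi/n)$ and the denominator $\cos(2\pi/n)$ each tend to $\cos 0 = 1$, and the latter is eventually nonzero, so the quotient tends to $1$. Hence $\cosh \delta_n \to 1$. Applying the inverse $\cosh^{-1}$, which is continuous at $1$ with $\cosh^{-1}(1) = 0$, then yields $\delta_n \to 0$.

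There is no real obstacle here; the computation is elementary, and the only point requiring a moment's care is that $\cosh$ is even, so inverting it relies on the nonnegativity of $\delta_n$---which is automatic, as $\delta_n$ is a distance. If one wishes to quantify the decay, I would substitute the Taylor expansions $\cos x = 1 - x^2/2 + O(x^4)$ into the identity to obtain $\cosh \delta_n = 1 + \tfrac{3\pi^2}{2n^2} + O(n^{-4})$, and then compare this with $\cosh \delta_n = 1 + \delta_n^2/2 + O(\delta_n^4)$ to extract the sharper asymptotic $\delta_n \sim \sqrt{3}\,\pi/n$, exhibiting an explicit rate of convergence.
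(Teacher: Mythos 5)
Your argument is correct and is essentially the paper's own: the paper likewise derives the observation as an ``elementary consequence'' of the Mednykh--Vesnin identity, by noting that the right-hand side tends to $1$ and inverting $\cosh$. The extra asymptotic $\delta_n \sim \sqrt{3}\,\pi/n$ you extract is accurate but not needed for the statement.
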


\noindent Since the systole of a compact right-angled reflection orbifold is twice the minimal distance between two nonadjacent walls (see Section \ref{sec:systoles} for the definition of the systole of a hyperbolic orbifold), and by an application of a collar lemma as in \cite{MR1273264} to the lateral faces of $L_n$, whose areas are fixed, the following is immediate from Observation \ref{ob:topandbottom}.

\begin{corollary}\label{cor:precisesystole}
    For sufficiently large $n$, the systole of the L\"obell orbifold $L_n$ is precisely $2\delta_n$.
\end{corollary}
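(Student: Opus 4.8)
The plan is to show that for large $n$, the shortest geodesic in the orbifold $\mathbb{H}^3/\Gamma_n$ arises from crossing the "waist" of the polyhedron between the top and bottom faces, and that no other pair of nonadjacent walls can be closer. Since the systole of a compact right-angled reflection orbifold equals twice the minimal distance between nonadjacent walls, it suffices to argue that among all pairs of nonadjacent faces of $L_n$, the top–bottom pair realizes the minimal distance once $n$ is sufficiently large. First I would invoke Observation~\ref{ob:topandbottom} to conclude that $\delta_n \to 0$, so the top–bottom distance becomes arbitrarily small. The remaining task is therefore to produce a positive lower bound, uniform in $n$, on the distance between any other pair of nonadjacent walls.

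The key tool is a collar lemma argument applied to the lateral faces of $L_n$, as indicated in the hint referencing \cite{MR1273264}. The essential geometric input is that each lateral face is a right-angled hyperbolic pentagon, and the areas of these pentagons are fixed (indeed, by the Gauss–Bonnet theorem, a right-angled hyperbolic pentagon has area $\pi - 5\cdot(\pi/2) + \text{(for a pentagon)} = \pi/2$, independent of $n$). A hyperbolic surface or totally geodesic face with fixed area admits an embedded collar of definite width, which translates into a lower bound on how close a disjoint (nonadjacent) wall can approach. Concretely, I would argue that any nonadjacent wall meeting the neighborhood of a lateral face must stay outside a collar of width bounded below by a constant $c > 0$ depending only on the fixed pentagon area, hence any two nonadjacent walls both involving lateral faces are separated by distance at least $c$.

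The main obstacle, and the step I expect to require the most care, is handling the pairs of walls that do \emph{not} both come from lateral faces—in particular bounding below the distance from the top (or bottom) face to any nonadjacent lateral face, and the distance between two nonadjacent lateral faces whose common combinatorial structure changes with $n$. One must verify that as $n \to \infty$, these cross-distances do not also shrink to $0$ faster than, or comparably to, $\delta_n$. I would address this by using the explicit right-angled structure: since $L_n$ is right-angled, adjacent faces meet orthogonally, and the dihedral angle combinatorics force nonadjacent faces to be separated. A clean way to obtain the uniform bound is to note that the lateral pentagons tile an annular region of fixed area whose geometry stabilizes, so the relevant minimal distances between nonadjacent lateral walls converge to a positive limit rather than to $0$. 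Comparing this positive limiting constant with $\delta_n \to 0$ then shows that for all sufficiently large $n$ the top–bottom separation strictly dominates, whence the systole equals exactly $2\delta_n$.

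Finally, I would assemble these pieces: fix the positive lower bound $c$ from the collar lemma on the lateral faces and the limiting separation of nonadjacent lateral walls, choose $N$ so large that $\delta_n < c/2$ for all $n \geqslant N$ (possible by Observation~\ref{ob:topandbottom}), and conclude that for such $n$ the minimal distance between nonadjacent walls is realized precisely by the top and bottom faces. Hence the systole is exactly $2\delta_n$, as claimed.
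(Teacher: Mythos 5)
Your argument is correct and is essentially the paper's own (one-sentence) justification: the systole of a compact right-angled reflection orbifold is twice the minimal distance between nonadjacent walls, the collar lemma of \cite{MR1273264} applied to the lateral pentagons (each of area $\pi/2$ by Gauss--Bonnet) gives a uniform lower bound on the distance from a lateral face to any nonadjacent wall, and $\delta_n \to 0$ by Observation~\ref{ob:topandbottom}. Your ``main obstacle'' paragraph is unnecessary: since the top and bottom are the only non-lateral faces, every pair of nonadjacent walls other than the top--bottom pair contains a lateral face and is therefore already controlled by the collar bound, so no separate analysis of cross-distances is needed.
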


\noindent Observation \ref{ob:topandbottom} also yields the following.

\begin{corollary}
Let $M_n$ be manifold covers of the L\"obell orbifolds $L_n$ of uniformly bounded degree (for instance, the degree-$8$ manifold covers discussed above). Then the systole of $M_n$ approaches $0$ as $n \rightarrow \infty$.
\end{corollary}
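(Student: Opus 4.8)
The plan is to deduce the corollary directly from Observation~\ref{ob:topandbottom} by relating the systole of the cover $M_n$ to the systole of the underlying orbifold $L_n$. Let me think about what needs to be controlled. By Corollary~\ref{cor:precisesystole}, for large $n$ the systole of the orbifold $L_n$ equals $2\delta_n$, which tends to $0$. The geodesic realizing this systole is (up to the orbifold structure) the common perpendicular between the top and bottom faces, whose length is $\delta_n$; reflecting across one of these faces produces a closed geodesic of length $2\delta_n$ in the orbifold. I want to lift this short geodesic to the manifold cover $M_n$ and show its length does not blow up.

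The key step is the following elementary covering-space observation. If $M_n \to \HH^3/\Gamma_n$ is a cover of degree $d_n \leqslant D$ for some fixed bound $D$, then any closed geodesic $\gamma$ of length $\ell$ downstairs lifts to a closed geodesic upstairs of length at most $d_n \cdot \ell \leqslant D\ell$. Indeed, the primitive closed geodesic $\gamma$ corresponds to a conjugacy class in $\Gamma_n$; its image in the finite quotient $\Gamma_n / \pi_1(M_n)$ (or, more carefully, the relevant permutation action on the fibre) has order at most $d_n$, so traversing $\gamma$ at most $d_n$ times produces a loop that lifts to a closed loop, hence to a closed geodesic of length at most $d_n \ell$ in $M_n$. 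Applying this to the systole-realizing geodesic of length $2\delta_n$ yields a closed geodesic in $M_n$ of length at most $D \cdot 2\delta_n$, and since $\mathrm{sys}(M_n)$ is bounded above by the length of \emph{any} closed geodesic, we get $\mathrm{sys}(M_n) \leqslant 2D\delta_n \to 0$ by Observation~\ref{ob:topandbottom}.

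One subtlety I would address is that the short geodesic in the orbifold $L_n$ passes through (or is fixed by) reflective singularities, so I should phrase the argument either in terms of the reflection group $\Gamma_n$ and its finite-index torsion-free subgroup $\pi_1(M_n)$ directly, or note that the short \emph{nonsingular} closed geodesic of length $2\delta_n$ described above (obtained by doubling the common perpendicular across the top face) already lives in the interior, avoiding this issue. For the specific case of the degree-$8$ manifold covers of \cite{MR924975}, the bound $D = 8$ is explicit, so the conclusion $\mathrm{sys}(M_n) \leqslant 16\,\delta_n \to 0$ follows immediately.

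The main obstacle, such as it is, is bookkeeping rather than genuine difficulty: one must be careful that the upper bound on the systole of $M_n$ comes from exhibiting \emph{some} short closed geodesic, and that the uniform degree bound is what prevents the lifted length from escaping to infinity. No lower bound on $\mathrm{sys}(M_n)$ is needed. I expect the whole argument to be a short paragraph invoking Observation~\ref{ob:topandbottom} together with the degree-dependent lifting bound for lengths of closed geodesics.
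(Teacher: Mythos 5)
Your argument is correct and is precisely the reasoning the paper leaves implicit when it calls the corollary ``immediate'' from Observation~\ref{ob:topandbottom}: the product of the reflections in the top and bottom faces is a loxodromic element of $\Gamma_n$ of translation length $2\delta_n$, and some power of exponent at most the covering degree lies in $\pi_1(M_n)$, giving $\mathrm{sys}(M_n) \leqslant 2D\delta_n \to 0$. Your handling of the non-normal case via the orbit of the identity coset, and your observation that only an upper bound on $\mathrm{sys}(M_n)$ is needed, are both exactly right.
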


\noindent Indeed, for $n$ a multiple of $3$, we provide an explicit degree-$8$ oriented manifold cover of $L_n$ whose systole is precisely $2\delta_n$ for sufficiently large $n$; see Figure~\ref{fig:coloring}. We thus obtain straightforward examples of closed hyperbolic $3$-manifolds with arbitrarily small systole. Such manifolds were known to exist in dimension $3$ by Thurston's hyperbolic Dehn filling theorem (see, for instance, \cite[Sections~E.5~and~E.6]{MR1219310}). In fact, the L\"obell polyhedron $L_n$ decomposes into $2n$ copies of a polyhedron $T_n$ (see Section \ref{sec:construction}) which may be viewed as a Dehn filling of a cusped polyhedron $T_\infty$, so that Observation \ref{ob:topandbottom} ultimately also follows from a Dehn filling argument; see Section \ref{sec:dehn}.

\begin{figure}
    \centering
    \includegraphics[scale=0.9]{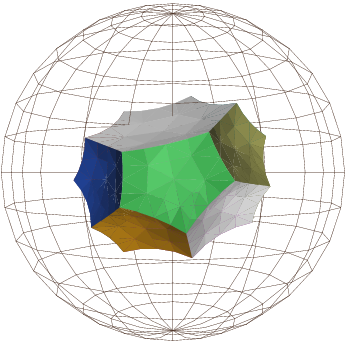}
    \caption{The Löbell polyhedron $L_6$, displayed in the Poincar\'e ball model of $\HH^3$ using {\color{blue}\underline{\href{www.geomview.org}{Geomview}}}.}
    \label{fig:Lobell-6}
\end{figure}

\begin{figure}
    \centering
    \includegraphics[scale=0.2]{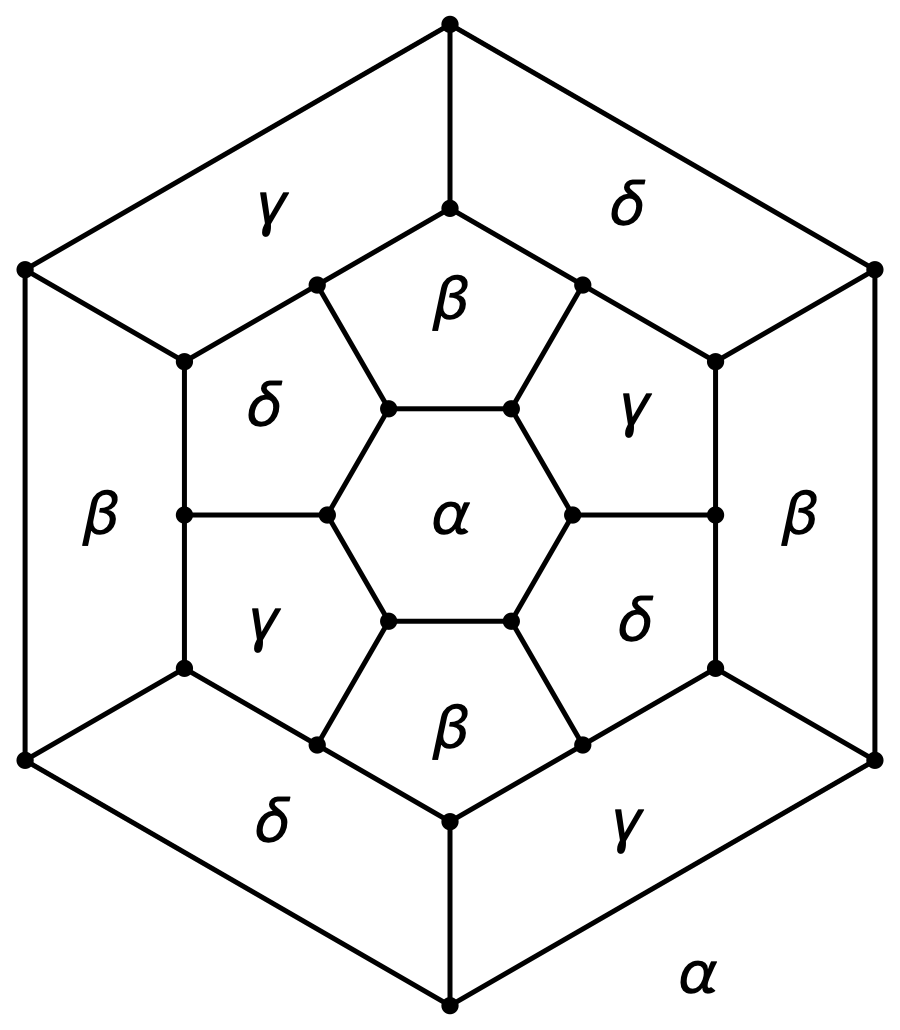}
    \caption{The above $4$-coloring of the L\"obell polyhedron $L_6$ and its analogues for the polyhedra $L_{3k}$, $k \geq 2$, determine (orientable) degree-$8$ manifold covers $M_{3k}$ as in \cite[Section~3.1]{MR3635439}. Since the top and bottom faces of $L_{3k}$ receive the same color, the manifold $M_{3k}$ contains a closed geodesic of length $2\delta_{3k}$. For sufficiently large $k$, the systole of $M_{3k}$ is in fact precisely $2\delta_{3k}$ by Corollary \ref{cor:precisesystole}.}
    \label{fig:coloring}
\end{figure}

Agol \cite{agol2006systoles} provided another construction that, given an input arithmetic lattice $\Gamma < \mathrm{Isom}(\mathbb{H}^3)$ and $\varepsilon > 0$, outputs a closed hyperbolic $3$-manifold with systole at most $\varepsilon$. Agol originally suggested this construction in dimension $4$ (where the problem had theretofore been open), but it evidently also applies in lower dimensions, and in fact applies in all dimensions by work of Belolipetsky and Thomson \cite{MR2821431} (alternatively, by a result of Bergeron, Haglund, and Wise \cite[Theorem~1.4]{MR2776645}). While the output lattices are nonarithmetic for fixed\footnote{Conjecturally, there is no dependence on $\Gamma$; see the discussion following Theorem 4.3 in \cite{agol2006systoles}, and \cite[Section~5.1]{MR2821431}.} $\Gamma$ and sufficiently small $\epsilon$, they are nevertheless all quasi-arithmetic, as observed by Thomson \cite{Tho16}. On the other hand, the reflection lattices $\Gamma_n$ are eventually not quasi-arithmetic. 

\begin{theorem}\label{th:quasi}
The L\"obell polyhedron $L_n$ is quasi-arithmetic if and only if  $n=~\!5$, $6,8,12$, and is properly quasi-arithmetic only when $n = 12$.
\end{theorem}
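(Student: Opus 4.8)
The plan is to run Vinberg's arithmeticity criterion for hyperbolic reflection groups and reduce it to an explicit numerical condition on $n$. Write $G_n$ for the Gram matrix of $L_n$ (diagonal entries $1$, entry $0$ for each pair of adjacent faces, and entry $-\cosh d$ for each pair of non-adjacent faces at distance $d$), and let $K_n$ be the field generated by the cyclic products $g_{i_1i_2}g_{i_2i_3}\cdots g_{i_ki_1}$ of its entries. Then $\Gamma_n$ is quasi-arithmetic precisely when $K_n$ is totally real and $\sigma(G_n)$ is positive semidefinite for every non-identity embedding $\sigma\colon K_n\to\R$; it is arithmetic precisely when, in addition, every cyclic product of the matrix $2G_n$ is an algebraic integer. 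Thus "properly quasi-arithmetic" means the first two conditions hold while the integrality condition fails. First I would record $G_n$: the right-angled combinatorics fixes all adjacencies, the Mednykh--Vesnin formula supplies the top--bottom entry $-\cosh\delta_n=-\cos(\pi/n)/\cos(2\pi/n)$, and the remaining (pentagon) distances are computed similarly. All entries lie in the real field $\mathbb{Q}(\cos(\pi/n))$, so $K_n$ is automatically totally real and the first condition holds for every $n$. Since $\cosh^2\delta_n=(1+\cos(2\pi/n))/(2\cos^2(2\pi/n))$ is itself a length-two cyclic product, one has $K_n\supseteq\mathbb{Q}(\cos(2\pi/n))$, and I expect equality, so that the non-identity embeddings of $K_n$ are exactly the maps $\sigma_k\colon\cos(2\pi/n)\mapsto\cos(2\pi k/n)$ for $k\in(\Z/n)^\times/\{\pm1\}$ with $k\neq\pm1$.

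The heart of the argument is the positive-semidefiniteness condition, and I would obtain the "only if" direction cheaply, using only the $2\times2$ principal block of $\sigma_k(G_n)$ indexed by the top and bottom faces. Its determinant is $1-\sigma_k(\cosh^2\delta_n)=1-\tfrac{1+\cos(2\pi k/n)}{2\cos^2(2\pi k/n)}$, which is nonnegative if and only if $\cos(2\pi k/n)\le-1/2$. Hence quasi-arithmeticity forces $\cos(2\pi k/n)\le-\tfrac12$ for every $k$ coprime to $n$ with $1<k<n/2$; equivalently, there is no integer $k$ coprime to $n$ with $1<k<n/3$. An elementary argument then shows this holds exactly for $n\in\{5,6,8,12\}$: avoiding such a $k$ requires every prime below $n/3$ to divide $n$, which fails for all large $n$ (the product of those primes exceeds $n$) and, after a short finite check, leaves precisely these four values. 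This is also what distinguishes $n=12$ from $n=10$, where the embedding $k=3$ violates the inequality.

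For the "if" direction I would treat the four surviving values directly. For $n=6$ one has $K_6=\mathbb{Q}$ and there is nothing to check, while for $n\in\{5,8,12\}$ the field $K_n$ is the real quadratic field $\mathbb{Q}(\sqrt5)$, $\mathbb{Q}(\sqrt2)$, $\mathbb{Q}(\sqrt3)$ respectively, with a single non-identity embedding, and I would verify that the full conjugate Gram matrix $\sigma(G_n)$ is positive semidefinite by an explicit computation, using the cyclic symmetry of $L_n$ to block-diagonalize. Finally, to separate arithmetic from properly quasi-arithmetic, I would test integrality of the cyclic products of $2G_n$: for $n=5,6,8$ these are all algebraic integers, so $\Gamma_n$ is arithmetic, whereas for $n=12$ the cyclic product $4\cosh^2\delta_{12}=(8+4\sqrt3)/3$ has non-monic minimal polynomial $9x^2-48x+16$ and so is not an algebraic integer, obstructing arithmeticity.

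I expect two main obstacles. The first is pinning down $K_n$ exactly and, relatedly, confirming that the violating embedding $\sigma_k$ used in the necessity step is genuinely non-trivial on $K_n$ rather than merely on the larger field $\mathbb{Q}(\cos(\pi/n))$ generated by the raw entries; here one must check that $\sigma_k(\cosh^2\delta_n)\neq\cosh^2\delta_n$ for the chosen $k$. The second is, for the four good values, controlling the \emph{full} conjugate Gram matrix --- all the pentagon distances, not just the top--bottom block --- in order to certify positive semidefiniteness, since in principle some other pair of faces could obstruct it.
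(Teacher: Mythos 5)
Your overall strategy---Vinberg's criterion, with necessity extracted from a single $2\times 2$ principal minor and arithmeticity tested on the cyclic product $4\cosh^2\delta_n$---is sound, and the parts you actually compute are correct: the minor condition $1-\sigma_k(\cosh^2\delta_n)\geq 0$ is indeed equivalent to $\cos(2\pi k/n)\leq -\tfrac12$, the set of $n$ admitting no $k$ coprime to $n$ with $1<k<n/3$ is exactly $\{5,6,8,12\}$, and $4\cosh^2\delta_{12}=(8+4\sqrt3)/3$ is a non-integral cyclic product of $2G_{12}$ (the paper uses the weight $4a_{12}^2=\tfrac{4(3+\sqrt3)}{3}$ instead, to the same effect). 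Your necessity argument is genuinely different from the paper's, which first invokes a face-restriction theorem (a quasi-arithmetic Coxeter $3$-polyhedron has quasi-arithmetic Coxeter polygon faces) together with Takeuchi's classification of arithmetic $(2,4,n)$-triangle groups to cut the problem down to $n\in\{5,6,7,8,10,12,18\}$, and only then examines Gram matrices; your route avoids both external inputs and is arguably more self-contained.

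However, the two obstacles you flag are real gaps, and both are resolved in the paper by a device you never introduce: the decomposition of $L_n$ into $2n$ copies of a twice-truncated tetrahedron $T_n$, whose reflection group is commensurable with $\Gamma_n$ and whose Gram matrix is only $6\times 6$. First, your necessity step needs the violating embedding to be non-trivial on the ground field $k(L_n)$ (the field generated by cyclic products), and for that you need $k(L_n)=\Q\left(\cos\frac{2\pi}{n}\right)$; from $L_n$ alone you only get $k(L_n)\supseteq\Q(\cosh^2\delta_n)$, and the map $c\mapsto\frac{1+c}{2c^2}$ is not injective, so a priori some $\sigma_k$ could fix $\cosh^2\delta_n$ while moving $\cos\frac{2\pi}{n}$. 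In $T_n$ the dihedral angle $\frac{\pi}{n}$ contributes the Gram entry $-\cos\frac{\pi}{n}$, so $\cos^2\frac{\pi}{n}$ is itself a cyclic product and $\cos\frac{2\pi}{n}\in k(T_n)=k(L_n)$ immediately (the ground field being a commensurability invariant). Second, and more seriously, your sufficiency step for $n=5,6,8,12$ requires positive semidefiniteness of the \emph{entire} conjugated Gram matrix of $L_n$, a matrix of size $(2n+2)\times(2n+2)$ whose off-diagonal entries include the hyperbolic cosines of all pairwise distances between non-adjacent lateral pentagons---quantities you never determine and whose computation is not routine. The paper reduces this to a finite check on the explicit $6\times 6$ matrix $G(T_n)$, whose three unknown weights $a_n$, $d_n$ are pinned down by the vanishing of $\det G(T_n)$ and its $5\times 5$ principal minors. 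Without some such reduction (or an actual execution of your proposed block-diagonalization, entries included), the ``if'' direction of the theorem remains unproved in your write-up.
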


\noindent Recall that a lattice is said to be {\em properly quasi-arithmetic} if it is quasi-arithmetic but not arithmetic. See Section~\ref{sec:arith} for definitions. We remark that it is unclear to us whether Theorem \ref{th:quasi} exemplifies a more general phenomenon. More precisely, it appears that the following question is open.

\begin{question}\label{q:dehn}
Are only finitely many Dehn fillings of a complete finite-volume noncompact hyperbolic $3$-orbifold quasi-arithmetic?
\end{question}

\noindent The answer to Question \ref{q:dehn} is known to be affirmative if one replaces ``quasi-arithmetic'' with ``arithmetic"; see Maclachlan and Reid \cite[Cor.~11.2.2]{MR1937957}. 

Returning to our discussion on L\"obell polyhedra, Vesnin \cite{Ves91} observed that the L\"obell polyhedron $L_n$ is nonarithmetic for $n \ne 5, 6, 7, 8, 10, 12, 18$. Antol\'in-Camarena, Maloney, and Roeder \cite{AMR09} later showed that $L_n$ is arithmetic if and only if $n = 5, 6, 8$. 

Our proof of Theorem \ref{th:quasi} is straightforward and uses only classical tools from Vinberg's theory of hyperbolic reflection groups. Along the way, we compute the adjoint trace fields of the lattices $\Gamma_n$.

\begin{theorem}\label{th:commens}
The adjoint trace field $k_n$ of $\Gamma_n$ is $\mathbb{Q}\left(\cos\frac{2\pi}{n}\right)$. In particular,
if $p,q \geq~\!5$ are distinct primes, then the L\"obell polyhedra $L_p$ and $L_q$ are incommensurable. 
\end{theorem}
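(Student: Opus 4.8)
The plan is to compute the adjoint trace field of the reflection group $\Gamma_n$ directly from the combinatorics and geometry of the right-angled polyhedron $L_n$, using Vinberg's machinery. Recall that for a hyperbolic reflection group generated by reflections in the walls of a polyhedron, there is a Gram matrix $G = (g_{ij})$ recording the pairwise configurations of the bounding hyperplanes: diagonal entries equal $1$, and off-diagonal entries equal $-\cos\theta_{ij}$ when two walls meet at dihedral angle $\theta_{ij}$ (here always $\pi/2$, giving entry $0$), and $-\cosh d_{ij}$ when they are disjoint at distance $d_{ij}$. Since all dihedral angles of $L_n$ are right, the only nonzero off-diagonal entries come from pairs of nonadjacent faces, and among these the relevant data will be governed by the $-\cosh\delta_n$ entries for the top/bottom pair and the analogous entries for nonadjacent lateral pentagons.

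The key computational input is the Mednykh–Vesnin formula already quoted in the excerpt, $\cosh\delta_n = \cos(\pi/n)/\cos(2\pi/n)$, which exhibits $\cosh\delta_n$ as an element of $\Q(\cos\frac{2\pi}{n})$ (using $\cos\frac{\pi}{n}$ and a half-angle identity, or by noting $\cos\frac{2\pi}{n}$ generates the maximal real subfield of the relevant cyclotomic field and controls $\cos\frac{\pi}{n}$ up to the degree-two extension that collapses here). First I would assemble the full Gram matrix of $L_n$ and verify that all of its entries lie in $\Q(\cos\frac{2\pi}{n})$; this is the ``field of definition'' side of the argument. The adjoint trace field, by Vinberg's criterion, is generated over $\Q$ by the \emph{cyclic products} $g_{i_1 i_2}g_{i_2 i_3}\cdots g_{i_k i_1}$ of Gram matrix entries around closed cycles in the index set. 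So the real task splits into two halves: showing every cyclic product lands in $\Q(\cos\frac{2\pi}{n})$, and exhibiting enough cyclic products to generate all of $\Q(\cos\frac{2\pi}{n})$.

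For the containment, since individual entries already lie in $\Q(\cos\frac{2\pi}{n})$, every cyclic product does too, so $k_n \subseteq \Q(\cos\frac{2\pi}{n})$ is immediate once the Gram entries are pinned down. For the reverse containment I would find a single short cycle whose product is a primitive generator: the $2$-cycle through the top and bottom faces contributes $g_{\text{top},\text{bot}}^2 = \cosh^2\delta_n$, and it should suffice to produce a cyclic product equal to (a rational combination of) $\cos\frac{2\pi}{n}$ itself, most naturally from a triangle of mutually nonadjacent lateral faces or a top–lateral–bottom cycle whose entries combine to isolate $\cos\frac{2\pi}{n}$. I expect this reverse direction to be the main obstacle: one must check that no ``accidental'' algebraic collapse shrinks the field generated by the cyclic products to a proper subfield of $\Q(\cos\frac{2\pi}{n})$, which requires knowing the precise distances between nonadjacent lateral faces and not merely between top and bottom. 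Once $k_n = \Q(\cos\frac{2\pi}{n})$ is established, the incommensurability statement follows immediately from the fact that the adjoint trace field is a commensurability invariant (Maclachlan–Reid): for distinct primes $p,q \geq 5$ the fields $\Q(\cos\frac{2\pi}{p})$ and $\Q(\cos\frac{2\pi}{q})$ are the maximal totally real subfields of $\Q(\zeta_p)$ and $\Q(\zeta_q)$, hence have degrees $(p-1)/2$ and $(q-1)/2$ over $\Q$, which are distinct, so the fields themselves are distinct and $\Gamma_p$, $\Gamma_q$ cannot be commensurable.
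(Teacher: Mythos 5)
Your framework is the right one---identify the adjoint trace field with Vinberg's ground field $\Q(\Cyc(G))$, show it equals $\Q\left(\cos\frac{2\pi}{n}\right)$, and finish with the degree count $\deg k_p = \frac{p-1}{2} \neq \frac{q-1}{2} = \deg k_q$ together with commensurability invariance. That last step is complete and correct. But the core of the theorem, the equality $k_n = \Q\left(\cos\frac{2\pi}{n}\right)$, is left as a plan rather than carried out, and the plan as stated has two genuine gaps. First, for the containment $k_n \subseteq \Q\left(\cos\frac{2\pi}{n}\right)$ you propose to ``verify that all entries of the Gram matrix lie in $\Q\left(\cos\frac{2\pi}{n}\right)$''; this is not something you can take for granted, and indeed in the analogous computation the natural Gram matrix entries involve quantities like $\sqrt{1+\frac{1}{2\cos(2\pi/n)}}$, which do \emph{not} lie in $\Q\left(\cos\frac{2\pi}{n}\right)$---only their cyclic products do. So the containment genuinely requires enumerating cyclic products, not entries. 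Second, and more seriously, for the reverse containment the only explicit datum you have is $\cosh^2\delta_n = \frac{1+c}{2c^2}$ with $c = \cos\frac{2\pi}{n}$; since $c$ satisfies the quadratic $2uc^2 - c - 1 = 0$ over $\Q(u)$ where $u = \cosh^2\delta_n$, this pins down $k_n$ only up to a possible index-$2$ subfield. You acknowledge that you would need a further cyclic product coming from the distances between nonadjacent lateral pentagons, and you correctly flag this as ``the main obstacle''---but you never compute those distances or exhibit the generating cyclic product. Without that, neither inclusion is established, and the ``in particular'' statement does not follow (both fields could a priori collapse to small subfields of equal degree).

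The paper sidesteps exactly this difficulty by not working with $L_n$ (which has $2n+2$ walls and many unknown pairwise distances) at all: it decomposes $L_n$ into $2n$ copies of a six-walled Coxeter slice $T_n$, whose reflection group is commensurable with $\Gamma_n$ and hence has the same adjoint trace field. The Gram matrix of $T_n$ is $6\times 6$, its unknown entries $d_n$ and $a_n$ are forced by the vanishing of $\det G_n$ and its $5\times 5$ principal minors, and the resulting cyclic products visibly generate $\Q\left(\cos\frac{2\pi}{n}\right)$ (the order-$n$ edge of $T_n$ alone contributes $\cos^2\frac{\pi}{n} = \frac{1+\cos(2\pi/n)}{2}$, which immediately gives the generator---an edge that does not exist in the all-right-angled $L_n$). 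If you want to salvage your direct approach, you would have to carry out the lateral-face distance computation you deferred; passing to the slice is the efficient way to avoid it.
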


It is shown in \cite[Section~4.7.3]{MR1937957} that there are infinitely many pairwise incommensurable compact Coxeter polyhedra in $\mathbb{H}^3$. However, we could not find in the literature a justification of the existence of infinitely many pairwise incommensurable {\em right-angled} such polyhedra. Indeed, this was our initial motivation for considering the L\"obell polyhedra.
Since $\deg(k_n) = \frac{\phi(n)}{2} \rightarrow \infty$ as $n \rightarrow \infty$, where $\phi$ is Euler's totient function, one can in fact conclude from Theorem \ref{th:commens} that there is no infinite subsequence of $\Gamma_n$ consisting entirely of pairwise commensurable lattices. This fact is indeed already implied by Observation \ref{ob:topandbottom}; see Remark \ref{rem:systolecommensurability}. 

It is worth mentioning that the existence of infinitely many pairwise incommensurable {\em noncompact} finite-volume right-angled polyhedra in $\HH^3$ was already known. For instance, Meyer--Millichap--Trapp \cite{MR4063955} and Kellerhals \cite{kellerhals2022polyhedral} showed that the {\em ideal} right-angled antiprisms $A_n$ provide a sequence of pairwise incommensurable reflection groups with the same trace fields as those of the $\Gamma_n$. There is in fact more to be said about the relationship between these two families of right-angled polyhedra: indeed, as observed by Kolpakov \cite[Section~5.1]{MR2950475}, the L\"obell polyhedra $L_n$ can be viewed as having been obtained from the antiprisms $A_n$ via Dehn filling; see Section~\ref{sec:dehn}.

\subsection*{Acknowledgements} We are grateful to Sasha Kolpakov, Greg Kuperberg, Nicolas Tholozan, and Andrei Vesnin for helpful discussions. We thank the Institut des Hautes \'Etudes Scientifiques for hosting the first author in the fall of 2022, during which most of this work was completed. The second author was supported by the Huawei Young Talents Program.

\section{Preliminaries}

\subsection{Hyperbolic lattices}

Let $\mathbb{R}^{d,1}$ be the real vector space $\mathbb{R}^{d+1}$ equipped with the standard quadratic form $f$ of signature $(d,1)$, namely, 
$$f(x)=-x_0^2+x_1^2+\dots+x_d^2.$$

The hyperboloid $\mathcal{H}=\{x \in \mathbb{R}^{d,1} \,|\, f(x) = -1 \}$ has two connected components 
$$
\mathcal{H}^+ = \{x \in \mathcal{H} \,|\, x_0 > 0\} \text{ and } \mathcal{H}^- = \{x \in \mathcal{H} \,|\, x_0 < 0\}.
$$

The $d$-dimensional hyperbolic space $\HH^d$ is the manifold $\mathcal{H}^+$ with the Riemannian metric $\rho$ induced by restricting $f$ to each tangent space $T_p(\mathcal{H}^+)$, $p \in \mathcal{H}^+$. This hyperbolic metric $\rho$ satisfies $\cosh \rho(x, y) = - (x, y)$, where $(x, y)$ is the scalar product in $\R^{d,1}$ associated to $f$.
The hyperbolic $d$-space $\HH^d$ is known to be the unique simply connected complete Riemannian $d$-manifold with constant sectional curvature $-1$.
{\em Hyperplanes} of $\HH^d$ are intersections of linear hyperplanes of $\R^{d,1}$ with $\mathcal{H}^+$, and are totally geodesic submanifolds of codimension $1$ in $\HH^d$. 

Let $\mathrm{O}_{d,1} = \mathbf{O}(f, \R)$ be the orthogonal group of the form $f$, and $\mathrm{O}'_{d,1} < \mathrm{O}_{d,1}$ be the subgroup (of index $2$) preserving $\mathcal{H}^+$. 
The group $\mathrm{O}'_{d,1}$ preserves the metric $\rho$ on $\HH^d$, and is in fact the full group $\mathrm{Isom}(\mathbb{H}^d)$ of isometries of the latter.

If $\Gamma < \mathrm{O}'_{d,1}$ is a lattice, i.e., if $\Gamma$ is a discrete subgroup of $\mathrm{O}'_{d,1}$ with a finite-volume fundamental domain in $\HH^d$, then the quotient $M=\mathbb{H}^d/\Gamma$ is a complete finite-volume {\itshape hyperbolic orbifold}. If $\Gamma$ is torsion-free, then $M$ is a complete finite-volume Riemannian manifold, and is called a {\itshape hyperbolic manifold}.

Now set $G =\mathrm{O}
'_{d,1}$, and suppose $\G$ is an admissible (for $G$) algebraic $k$-group, i.e. $\G(\R)^o$ is isomorphic to $G^o$ and $\G^\sigma(\R)$ is a compact group for any non-identity embedding $\sigma \colon k \hookrightarrow \mathbb{R}$. Then any subgroup $\Gamma < G$ commensurable with the image in $G$ of $\G(\mathcal{O}_k)$ is an \textit{arithmetic  lattice} (in $G$) with {\em ground field} $k$. 

Since $G$ also admits non-arithmetic lattices, we discuss some weaker notions of arithmeticity for lattices in $G$. Following Vinberg \cite{MR0207853}, a lattice $\Gamma < G$ is called {\em quasi-arithmetic} with {\em ground field} $k$ if some finite-index subgroup of $\Gamma$ is contained in the image in $G$ of $\G(k)$, where $\G$ is some admissible algebraic $k$-group, and is called  \textit{properly quasi-arithmetic} if $\Gamma$ is  quasi-arithmetic, but not arithmetic on the nose. 

It is worth stressing that the  notion of quasi-arithmeticity is indeed broader than that of arithmeticity; as was mentioned in the introduction, the nonarithmetic closed hyperbolic manifolds constructed by Agol \cite{agol2006systoles} and Belolipetsky--Thomson \cite{MR2821431} exist in all dimensions and, as observed by Thomson \cite{Tho16}, are quasi-arithmetic. The first examples of properly quasi-arithmetic lattices in dimensions $3$, $4$, and $5$ were constructed by Vinberg \cite{MR0207853} via reflection groups.

\subsection{Convex polyhedra and arithmetic properties of hyperbolic reflection groups}\label{sec:arith}
 
A {\em (hyperbolic) reflection group} is a discrete subgroup of $\mathrm{O}
'_{d,1}$ generated by reflections in hyperplanes. 
The fixed hyperplanes of the reflections in a finite-covolume reflection group $\Gamma < \mathrm{O}
'_{d,1}$ divide $\HH^d$ into isometric copies of a single finite-volume convex polyhedron $P \subset \HH^d$. The polyhedron $P$ is a {\em Coxeter polyhedron}, that is, a finite-sided convex polyhedron in which the dihedral angle between any two adjacent facets is an integral submultiple of $\pi$. We say $P$ is a {\em fundamental chamber} for $\Gamma$. Conversely, given a finite-volume Coxeter polyhedron $P \subset \HH^d$, the group generated by the reflections in all the supporting hyperplanes, or {\em walls}, of $P$ is a finite-covolume reflection group $\Gamma < \mathrm{O}
'_{d,1}$ with fundamental chamber $P$. We thus frequently conflate finite-volume Coxeter polyhedra in $\HH^d$ with their corresponding lattices in $\mathrm{O}
'_{d,1}$ (or their corresponding hyperbolic orbifolds).

Let  $H_e = \{x \in \HH^d \mid (x,e)=0\}$ be a hyperplane in $\HH^d \subset \R^{d,1} $ whose linear span in $\R^{d,1}$ has normal vector $e \in \R^{d,1}$ with $(e,e)=1$, and $H_e^- = \{x \in \HH^d \mid (x,e) \le 0\}$ be the half-space associated with it. If
$$P = \bigcap_{j=1}^N H_{e_j}^- $$
is a Coxeter polyhedron in $\HH^d$, 
then the matrix $G(P) = \{g_{ij}\}^N_{i,j=1} = \{(e_i, e_j)\}^N_{i,j=1}$ is its {\em Gram matrix}. We write $K(P) = \Q\left(\{g_{ij}\}^N_{i,j=1}\right)$ and denote by $k(P)$ the field generated by all possible cyclic products of the entries of $G(P)$; we call the field $k(P)$ the {\em ground field} of $P$. For convenience, the set of all cyclic products of entries of a given matrix $A = (a_{ij})^N_{i,j=1}$, i.e., the set of all possible products of the form $a_{i_1 i_2} a_{i_2 i_3} \ldots a_{i_k i_1}$, will be denoted by $\Cyc(A)$. Thus, we have $k(P) = \Q\left(\Cyc(G(P))\right) \subset K(P)$.

The following criterion allows us to determine if a given finite-covolume hyperbolic reflection group $\Gamma$ with fundamental chamber $P$ is arithmetic, quasi-arithmetic, or neither.

\begin{theorem}[Vinberg's arithmeticity criterion \cite{MR0207853}]\label{V}
Let $\Gamma < \mathrm{O}
'_{d,1}$ be a reflection group with finite-volume fundamental chamber $P \subset \HH^d$. Then $\Gamma$ is arithmetic if and only if each of the following conditions holds:
\begin{itemize}
    \item[{\bf(V1)}] $K(P)$ is a totally real algebraic number field;
    \item[{\bf(V2)}] for any embedding $\sigma \colon K(P) \to \R$, such that 
    $\sigma\!\mid_{k(P)} \ne \id$, the matrix $G^\sigma(P)$ is positive semi-definite;
    \item[{\bf(V3)}] $\Cyc(2 \cdot G(P)) \subset \OOO_{k(P)}$,
\end{itemize}
and, in this case, the ground field of $\Gamma$ is $k(P)$. The group $\Gamma$ is quasi-arithmetic if and only if it satisfies conditions {\em\textbf{(V1)}--\textbf{(V2)}}, but not necessarily {\em\textbf{(V3)}}, and, in this case, the ground field of $\Gamma$ is again $k(P)$.
\end{theorem}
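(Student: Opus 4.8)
The plan is to prove both directions by producing (respectively, recognizing) an admissible algebraic group over $k(P)$ that virtually contains $\Gamma$. Recall that $\Gamma$ is generated by the reflections $R_i\colon x\mapsto x-2(x,e_i)e_i$, so in the basis given by a maximal independent subset of $\{e_1,\dots,e_N\}$ the generators have entries that are $\Z$-polynomials in the $g_{ij}$, and a priori $\Gamma<\mathrm{GL}_{d+1}(K(P))$. The first move is to descend to $k(P)$ by rescaling the normals. Writing $f_i=\lambda_i e_i$ gives $R_i(f_j)=f_j-2(\lambda_j/\lambda_i)g_{ij}f_i$, so it suffices to find $\lambda_i\in K(P)$ with $(\lambda_j/\lambda_i)g_{ij}\in k(P)$ whenever $g_{ij}\neq 0$. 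Viewing $(g_{ij})$ as a multiplicative $1$-cochain valued in $K(P)^\times/k(P)^\times$ on the graph recording the nonzero entries, the obstruction to expressing it as a coboundary $\lambda_i/\lambda_j$ is exactly the set of its cycle products, i.e. the cyclic products of $G(P)$; these lie in $k(P)$ by the very definition of the ground field, so the desired $\lambda_i$ exist and $\Gamma$ is conjugated into $\mathrm{GL}_{d+1}(k(P))$.

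Next I would descend the invariant form. Since $\Gamma$ has finite covolume it is Zariski dense in $\mathbf{O}(f)\cong\mathrm{O}'_{d,1}$ by Borel density, and the standard representation of the latter is absolutely irreducible, so the space of $\Gamma$-invariant bilinear forms is one-dimensional. As this space is cut out by $k(P)$-linear equations, it is spanned by a $k(P)$-rational form $q$ proportional to $f$; set $\G=\mathbf{O}(q)$, a $k(P)$-group with $\Gamma<\G(k(P))$. At the identity place $q$ has signature $(d,1)$. Condition \textbf{(V1)} makes $K(P)$, and hence $k(P)$, totally real, and --- crucially --- ensures the rescaling factors $\lambda_i$ have real Galois conjugates, so that for each embedding $\sigma$ nontrivial on $k(P)$ the form $q^\sigma$ is congruent to $G^\sigma(P)$; condition \textbf{(V2)} then forces $q^\sigma$ to be definite, whence $\G^\sigma(\R)$ is compact and $\G$ is admissible. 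This already gives quasi-arithmeticity with ground field $k(P)$. For arithmeticity I would invoke \textbf{(V3)}: the matrix entries of the generators, and hence of all words in the $R_i$, are $\Z$-combinations of the doubled cyclic products in $\Cyc(2\cdot G(P))$, so their integrality makes the $\OOO_{k(P)}$-span $L$ of the $f_i$ a $\Gamma$-stable lattice; thus $\Gamma$ lies in the stabilizer of $L$, which is commensurable with $\G(\OOO_{k(P)})$, and $\Gamma$ is arithmetic.

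For the converse, suppose a finite-index subgroup of $\Gamma$ lies in $\G'(k')$ for an admissible $k'$-group $\G'$. Admissibility gives $\G'(\R)\cong\mathrm{O}'_{d,1}$ with $(\G')^\sigma(\R)$ compact, which translates --- after matching the invariant form of $\Gamma$ with the Gram form up to scaling and Galois --- into the total reality of \textbf{(V1)} and the definiteness of \textbf{(V2)}. The key point is that $k'=k(P)$: the field generated by the cyclic products of $G(P)$ is the adjoint trace field of $\Gamma$, a commensurability invariant that coincides with the ground field of any admissible group virtually containing $\Gamma$. If moreover $\Gamma$ is arithmetic, a finite-index subgroup sits in $\G'(\OOO_{k'})$, all of whose elements have integral entries and traces; expressing these traces through the $2g_{ij}$ and their cyclic products forces $\Cyc(2\cdot G(P))\subset\OOO_{k(P)}$, which is \textbf{(V3)}.

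I expect the main obstacle to be the signature-and-integrality bookkeeping in the forward direction. Producing the $k(P)$-rational invariant form is clean, but controlling the signature of $q^\sigma$ at each archimedean place requires tracking the Galois behaviour of the factors $\lambda_i$, which lie in $K(P)$ rather than $k(P)$ --- this is exactly where the total reality in \textbf{(V1)} is indispensable. Equally delicate is the precise role of the factor $2$ in \textbf{(V3)}: one must show that stabilizing an $\OOO_{k(P)}$-lattice is governed by integrality of the \emph{doubled} cyclic products and no weaker condition, and conversely that arithmeticity forces exactly this. On the necessity side, the identification of the ground field with $k(P)$ --- that the cyclic-product field is the canonical commensurability invariant --- is the remaining conceptual linchpin.
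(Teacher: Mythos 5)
This statement is Vinberg's arithmeticity criterion, which the paper quotes from \cite{MR0207853} without proof, so there is no in-paper argument to compare against; your sketch should be measured against Vinberg's original proof, whose overall strategy (rescale the normals so that the generators are defined over the field of cyclic products, descend the invariant form, then stabilize an $\OOO_{k(P)}$-lattice) it reproduces in outline. The rescaling step is fine: the cocycle-on-the-graph argument for choosing the $\lambda_i$ is a legitimate repackaging of Vinberg's construction, and the descent of the invariant form via Zariski density and absolute irreducibility is standard, as is the deduction of admissibility from \textbf{(V1)}--\textbf{(V2)}.

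The genuine gap is in the step ``\textbf{(V3)} implies the $\OOO_{k(P)}$-span $L$ of the $f_i$ is $\Gamma$-stable.'' The matrix entries of the generators in the basis $f_i=\lambda_i e_i$ are the quantities $2(\lambda_j/\lambda_i)g_{ij}$, and \textbf{(V3)} only guarantees that \emph{cyclic} products of the $2g_{ij}$ are algebraic integers; it does not make these individual entries integral. For instance, $\bigl(2(\lambda_j/\lambda_i)g_{ij}\bigr)\bigl(2(\lambda_i/\lambda_j)g_{ji}\bigr)=(2g_{ij})(2g_{ji})$ lies in $\OOO_{k(P)}$ by \textbf{(V3)}, but a product of two elements of $k(P)$ being an algebraic integer does not force either factor to be one. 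Vinberg's resolution is to replace your lattice by the $\OOO_{k(P)}$-module generated by all chain-weighted vectors $(2g_{i_1i_2})(2g_{i_2i_3})\cdots(2g_{i_{m-1}i_m})\,e_{i_m}$: this module is $\Gamma$-stable by construction, since applying a generating reflection merely extends the chain, and \textbf{(V3)} is precisely the condition that adjoining a closed loop to a chain multiplies the weight by an algebraic integer, so that the module is finitely generated and hence a genuine lattice. A symmetric gap sits in the converse: to deduce \textbf{(V3)} from arithmeticity you must actually exhibit each element of $\Cyc(2\cdot G(P))$ as a $\Z$-polynomial in traces of elements of $\Gamma$ (Vinberg does this using products of the generating reflections); ``expressing these traces through the $2g_{ij}$'' is the right idea but is the entire content of that direction. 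You flag both points as the delicate ones, correctly, but as written the proposal does not close them.
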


\begin{remark}\label{rem:quasi-arithmetic}
Note that $2\cos \frac{\pi}{n}$ is always an algebraic integer. Thus, if there are no dashed edges in the Coxeter--Vinberg diagram of a finite-volume Coxeter polyhedron $P$, then condition  \textbf{(V3)} above automatically holds, and there is no distinction between arithmeticity and quasi-arithmeticity for the associated reflection group $\Gamma$. In particular, a triangle group acting on $\HH^2$ is quasi-arithmetic precisely when it is arithmetic.
\end{remark}

\begin{remark}\label{rem:commensurability}
    Work of Vinberg \cite[Section 4]{Vin71} implies that the ground field of a finite-covolume hyperbolic reflection group coincides with its adjoint trace field, and is thus a commensurability invariant.
\end{remark}

\section{Geometry and arithmetic of L\"obell orbifolds}

\subsection{A decomposition of $L_n$}\label{sec:construction}

For any $n \geqslant 5$ the L\"obell polyhedron $L_n$ admits a decomposition into $2n$ isometric ``slices" $T_n$, each of which may be regarded as a twice-truncated tetrahedron; this decomposition is illustrated in Figure~\ref{fig:Lobell} for $n = 6$, where the hyperbolic triangles $ABC$ and $A'B'C'$ are exactly the
results of these truncations. 

The polyhedron $T_n$ is a Coxeter polyhedron 
whose edges are labeled in   Figure~\ref{fig:Lobell}, right.  The Coxeter--Vinberg diagram for $T_n$ is given in Figure~\ref{fig:CV-diagram-T6}. The weight $d_n$ in this diagram is equal to $\cosh \delta_n$ since the distance between the top and bottom faces of $L_n$ is the same as that for $T_n$; note that $\delta_n$ is also equal to the length of the edge $AA'$.

\begin{figure}
    \centering
    \includegraphics[scale = 0.35]{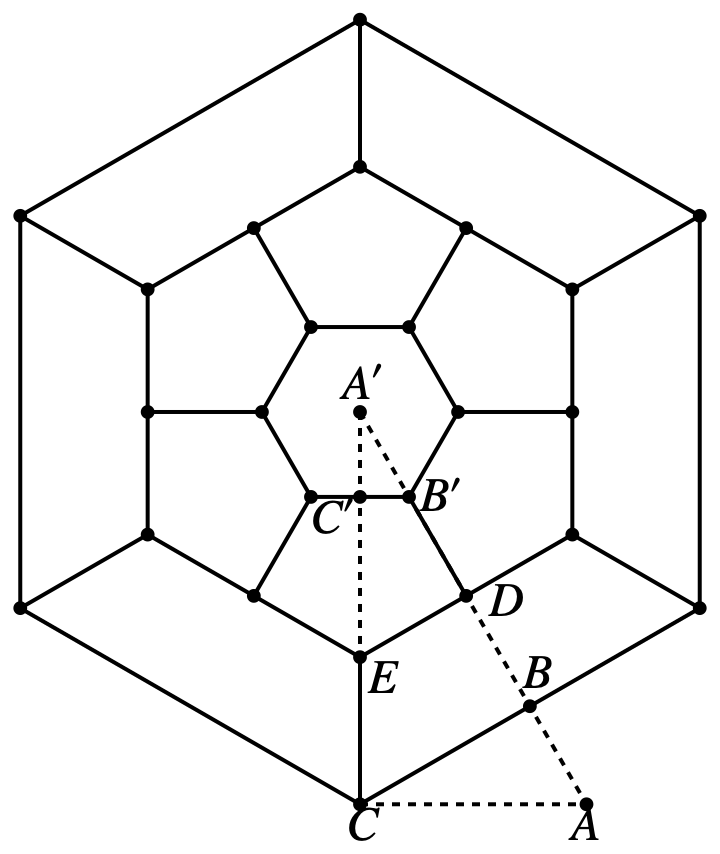} \quad
    \includegraphics[scale = 0.27]{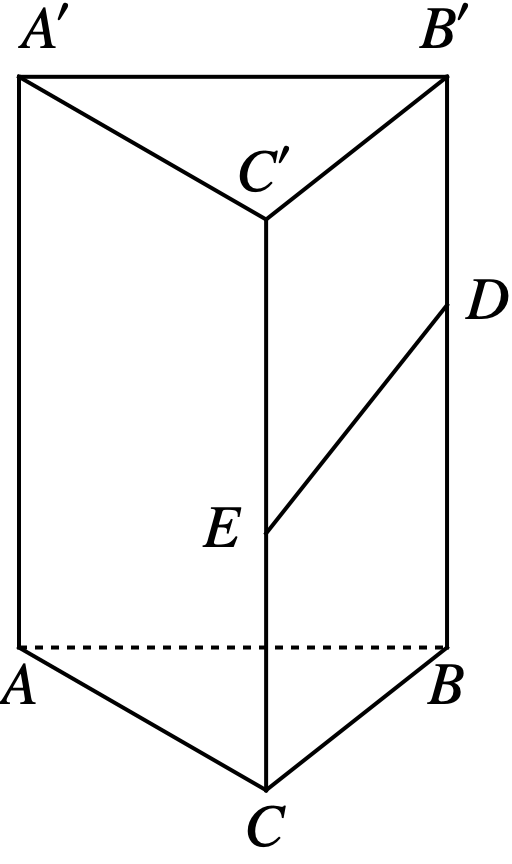}
    \caption{The Löbell polyhedron $L_6$ and its slice, the truncated tetrahedron~$T_6$. We use the analogous edge labeling for all the truncated tetrahedra $T_n$.}
    \label{fig:Lobell}
\end{figure}

The Gram matrix of $T_n$ is
$$G_n := G(T_n) = 
\newcommand{\Bold}[1]{\mathbf{#1}}\left(\begin{array}{rrrrrr}
1 & -\cos\left(\frac{\pi}{n}\right) & 0 & 0 & 0 & -\frac{1}{2} \, \sqrt{2} \\
-\cos\left(\frac{\pi}{n}\right) & 1 & 0 & 0 & -\frac{1}{2} \, \sqrt{2} & 0 \\
0 & 0 & 1 & -d_n & 0 & -a_n \\
0 & 0 & -d_n & 1 & -a_n & 0 \\
0 & -\frac{1}{2} \, \sqrt{2} & 0 & -a_n & 1 & 0 \\
-\frac{1}{2} \, \sqrt{2} & 0 & -a_n & 0 & 0 & 1
\end{array}\right)
$$

We know that the signature\footnote{The {\em signature} of a real symmetric matrix $A$ is the triple $(p,q,r)$ of numbers of positive, negative, and zero eigenvalues of $A$, respectively.} of $G_n$ is $(3,1,2)$, since $T_n \subset \HH^3$ is compact. Therefore, using the fact $\det G_n$ and all $5 \times 5$ principal minors of $G_n$ vanish, we obtain that $d_n^2 = (2a_n^2-1)(a_n^2-1)$.
This allows us to compute $d_n$ and $a_n$:
$$
d_n = \cosh \delta_n = \frac{\cos \frac{\pi}{n}}{\cos \frac{2\pi}{n}}; \quad a_n = \sqrt{1 + \frac{1}{2\cos \frac{2\pi}{n}}}.
$$

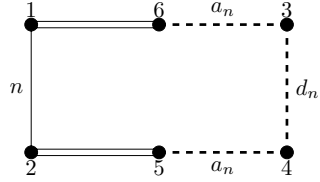
\begin{figure}
\centering
\scalebox{0.85}{
\begin{tikzpicture}
\draw[fill=black] 
 node [right] {}
(2,2) circle [radius=.1] node [above] {$6$}
(0,0) circle [radius=.1] node [below] {$2$}
(2,0) circle [radius=.1] node [below] {$5$}
(0,2) circle [radius=.1] node [above] {$1$}
(4,2) circle [radius=.1] node [above] {$3$}
(4,0) circle [radius=.1] node [below] {$4$}

(0,0) --  node [left] {$n$} (0,2)

(2,0.05) --  (0,0.05)
(2,-0.05) --  (0,-0.05)

(0,2.05) -- (2,2.05)
(0,1.95) -- (2,1.95)
;

\draw[dashed, very thick] (2,2) -- node [above] {$a_n$} (4,2);

\draw[dashed, very thick] (2,0) -- node [below] {$a_n$} (4,0);

\draw[dashed, very thick] (4,2) -- node [right] {$d_n$} (4,0);
\end{tikzpicture}}
\caption{The Coxeter--Vinberg diagram of the polytope $T_n$}\label{fig:CV-diagram-T6}
\end{figure}

\subsection{Proof of Theorem~\ref{th:commens}}

By Remark \ref{rem:commensurability}, we have that the adjoint trace field $k_n$ of the lattice $\Gamma_n$ coincides with the ground field of the polyhedron $T_n$ for $n \geq 5$. Our computations in Section~\ref{sec:construction} 
allow us to determine the cyclic products $C_n := \mathrm{Cyc}(G_n)$ using the Coxeter--Vinberg diagram shown in Figure~\ref{fig:CV-diagram-T6}.
We have
$$
\Q(C_n) = \Q\left(\left\{ \cos \frac{2\pi}{n};\quad 1 + \frac{1}{2\cos \frac{2\pi}{n}};\quad \frac{\cos^2 \frac{\pi}{n}}{\cos^2 \frac{2\pi}{n}};\quad \frac{\cos^2 \frac{\pi}{n}}{\cos \frac{2\pi}{n}}\left(1 + \frac{1}{2\cos \frac{2\pi}{n}}\right) \right\} \right).
$$
Thus
$$
k_n = \Q(C_n) = \Q\left(\cos \frac{2\pi}{n}\right).
$$
Since $\deg(k_n) = \frac{\phi(n)}{2}$, where $\phi$ is Euler's totient function, we have that for distinct primes $p$ and $q$ the fields $k_p$ and $k_q$ have different degrees, so that the polyhedra $L_p$ and $L_q$ are not commensurable. 
\qed

\subsection{Proof of Theorem~\ref{th:quasi}}
The reflection group $\Gamma_n$ is commensurable with the group $\Lambda_n$ generated by reflections in the walls of $T_n$. It thus remains to check quasi-arithmeticity of $\Lambda_n$ using Vinberg's arithmeticity criterion (see Theorem~\ref{V}).

The main result in \cite{MR4218347} implies in particular that any face of a quasi-arithmetic hyperbolic Coxeter $3$-polyhedron that is itself a Coxeter polygon is also quasi-arithmetic with the same ground field. Note that if some face $F$ of a Coxeter $3$-polyhedron meets its adjacent faces at even angles, i.e. angles of the form $\frac{\pi}{2m}$ for some $m \ge 1$, then $F$ is a Coxeter polygon. It is shown in \cite{MR4218347} that, in the latter case, if $P$ is moreover arithmetic, then $F$ is arithmetic as well.

The polyhedron $T_n$ has a $(2,4,n)$-triangular face orthogonal to all adjacent faces. For the $(2,4,n)$-triangle group, arithmeticity is equivalent to quasi-arithmeticity (see Remark~\ref{rem:quasi-arithmetic}). Takeuchi \cite{Tak77} showed that these triangle groups are  arithmetic only for  $n=5,6,7,8,10,12,18$. Thus, by the previous paragraph, we have that $\Lambda_n$ is not quasi-arithmetic for $n$ outside these values (and hence neither is $\Gamma_n$). It then suffices to check the conditions of Vinberg's criterion for the Gram matrix of the Coxeter polyhedron $T_n$ for $n$ within these values.

Denote by $\sigma_\ell$ the embeddings of the totally real number field $k_n = \Q\left(\cos \frac{2\pi}{n}\right)$, enumerated as follows:
$$
\sigma_\ell \left(\cos \frac{2\pi}{n}\right) = \cos \frac{2\pi \ell}{n}, \quad (\ell,n) = 1, \quad  1 \le \ell < n. 
$$
Note that $\sigma_\ell (k_n) = \Q\left(\cos \frac{2\pi \ell}{n}\right)$.

\begin{table}[]
    \centering
    \begin{tabular}{c|c|c|c}
        $n=5$ & $k_5 = \Q\left(\cos \frac{2\pi}{5}\right) = \Q(\sqrt{5})$ & $d_5 = 2$ & $G_5^{\sigma_\ell} \geqslant 0$ for all $\ell \ne 1$  \\
        \hline
        $n=6$ & $k_6 = \Q$ & $d_6 = 1$ & no nonidentity embeddings \\
        \hline
        $n = 7$ & $k_7 = \Q\left(\cos \frac{2\pi}{7}\right)$ & $d_7 = 3$ & $G_7^{\sigma_2}$ has signature $(3,1)$\\
        \hline
        $n = 8$ & $k_8 = \Q(\sqrt{2})$ & $d_8 = 2$ & $G_8^{\sigma_\ell} \geqslant 0$ for all $\ell \ne 1$\\
        \hline
        $n = 10$ & $k_{10} = \Q\left(\cos \frac{\pi}{5}\right) = \Q(\sqrt{5})$ & $d_{10} = 2$ & $G_{10}^{\sigma_3}$ has signature $(3,1)$ \\
        \hline
        $n = 12$ & $k_{12} = \Q\left(\cos \frac{\pi}{6}\right) = \Q(\sqrt{3})$ & $d_{12} = 2$ & $G_{12}^{\sigma_\ell} \geqslant 0$ for all $\ell \ne 1$ \\ 
        \hline
        $n = 18$ & $k_{18} = \Q\left(\cos \frac{2\pi}{18}\right)$ & $d_{18} = 3$ & $G_{18}^{\sigma_5}$ has signature $(3,1)$
    \end{tabular}
    \medskip
    \caption{Properties of the Gram matrices $G_n$ under the nonidentity embeddings $\sigma_\ell$ of the  fields $k_n$ for $n = 5,6,7,8,10,12,18$.}
    \label{tab:quasi-check}
\end{table}

We see from Table~\ref{tab:quasi-check}, which is a result of computations made in Sage, that $\Lambda_n$ is quasi-arithmetic only for $n=5,6,8,12$. In order to show that $\Lambda_n$ is properly quasi-arithmetic if and only if $n=12$, one needs to check the condition \textbf{(V3)} of Theorem~\ref{V}, that is, that $\Cyc(2 G_n) \subset \OOO_{k_n}$ if and only if $n=5, 6, 8$. This can easily be done even without a computer. To avoid being redundant, we record here only the (most interesting) case $n=12$. Indeed, the other cases were already verified in \cite{AMR09}.

\begin{lemma}
The reflection group $\Lambda_{12}$ is properly quasi-arithmetic.
\end{lemma}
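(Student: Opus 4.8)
The plan is to apply Vinberg's criterion (Theorem~\ref{V}) to the Gram matrix $G_{12}$ and split the verification into two halves: quasi-arithmeticity, which amounts to conditions \textbf{(V1)}--\textbf{(V2)}, and the failure of arithmeticity, which amounts to exhibiting a violation of condition \textbf{(V3)}.

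For the first half I would simply assemble the data already recorded in Table~\ref{tab:quasi-check}. The ground field is $k_{12} = \Q\left(\cos\tfrac{2\pi}{12}\right) = \Q\left(\tfrac{\sqrt{3}}{2}\right) = \Q(\sqrt{3})$, which is totally real, giving \textbf{(V1)}; and the $n=12$ row asserts that $G_{12}^{\sigma_\ell}$ is positive semidefinite for every nonidentity embedding $\sigma_\ell$ of $k_{12}$ (here the embedding $\sqrt{3}\mapsto-\sqrt{3}$), which is \textbf{(V2)}. Together these already establish that $\Lambda_{12}$ is quasi-arithmetic with ground field $\Q(\sqrt{3})$, so the real content of the lemma is to rule out arithmeticity.

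For the second half I would produce a single cyclic product of $2G_{12}$ that fails to be an algebraic integer. The cleanest witness is the length-two cyclic product along the edge $\{3,4\}$ of weight $d_{12}$, namely $(2g_{34})(2g_{43}) = 4d_{12}^2$. Using $d_{12} = \cosh\delta_{12} = \tfrac{\cos(\pi/12)}{\cos(\pi/6)}$ together with $\cos^2\tfrac{\pi}{12} = \tfrac{1+\cos(\pi/6)}{2} = \tfrac{2+\sqrt{3}}{4}$ and $\cos^2\tfrac{\pi}{6} = \tfrac{3}{4}$, one computes
\[
4d_{12}^2 = 4\cdot\frac{(2+\sqrt{3})/4}{3/4} = \frac{8+4\sqrt{3}}{3}.
\]
Since $\OOO_{k_{12}} = \Z[\sqrt{3}]$ (because $3\equiv 3 \pmod 4$), an element $a+b\sqrt{3}$ with $a,b\in\Q$ is integral precisely when $a,b\in\Z$; here $a=\tfrac{8}{3}$ and $b=\tfrac{4}{3}$ are not integers, equivalently the minimal polynomial $9x^2-48x+16$ is not monic over $\Z$. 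Hence $4d_{12}^2\notin\OOO_{k_{12}}$, so $\Cyc(2G_{12})\not\subset\OOO_{k_{12}}$ and \textbf{(V3)} fails. Combined with quasi-arithmeticity, this shows $\Lambda_{12}$ is properly quasi-arithmetic.

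This lemma is computationally light, so I expect no serious obstacle; the only points requiring care are bookkeeping, namely confirming that $4d_{12}^2$ genuinely arises as a cyclic product of $2G_{12}$ (the nonzero off-diagonal entries of $G_{12}$ form the single $6$-cycle $1$--$2$--$5$--$4$--$3$--$6$--$1$, and the back-and-forth along the single edge $\{3,4\}$ is a legitimate length-two cyclic product) and correctly reading off the ring of integers of $\Q(\sqrt{3})$. As a sanity check I note that the companion length-two product $4a_{12}^2 = \tfrac{12+4\sqrt{3}}{3}$ is likewise non-integral, so the obstruction to arithmeticity is not an artifact of a single entry.
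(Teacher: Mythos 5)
Your proof is correct and follows essentially the same route as the paper: quasi-arithmeticity via the computer-verified positive semidefiniteness of $G_{12}^{\sigma}$ for embeddings acting nontrivially on $k_{12}=\Q(\sqrt{3})$, plus the failure of \textbf{(V3)} witnessed by an explicit non-integral length-two cyclic product of $2G_{12}$. The paper's witness is $4a_{12}^2=\frac{4(3+\sqrt{3})}{3}$ rather than your $4d_{12}^2=\frac{8+4\sqrt{3}}{3}$, but both are legitimate cyclic products and your arithmetic (including the identification $\OOO_{k_{12}}=\Z[\sqrt{3}]$) checks out.
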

\begin{proof}
Notice that $\cos(\frac{\pi}{12}) = \frac{\sqrt{6}+\sqrt{2}}{4}$. The Gram matrix $G_{12}$ of $T_{12}$ is
$$
\newcommand{\Bold}[1]{\mathbf{#1}}\left(\begin{array}{rrrrrr}
1 & -\frac{\sqrt{2}(1+\sqrt{3})}{4} & 0 & 0 & 0 & -\frac{1}{2} \, \sqrt{2} \\
-\frac{\sqrt{2}(1+\sqrt{3})}{4} & 1 & 0 & 0 & -\frac{1}{2} \, \sqrt{2} & 0 \\
0 & 0 & 1 & -\frac{\sqrt{2}(3+\sqrt{3})}{6} & 0 & -\frac{\sqrt{\sqrt{3} + 3}}{\sqrt{3}} \\
0 & 0 & -\frac{\sqrt{2}(3+\sqrt{3})}{6} & 1 & -\frac{\sqrt{\sqrt{3} + 3}}{\sqrt{3}} & 0 \\
0 & -\frac{1}{2} \, \sqrt{2} & 0 & -\frac{\sqrt{\sqrt{3} + 3}}{\sqrt{3}} & 1 & 0 \\
-\frac{1}{2} \, \sqrt{2} & 0 & -\frac{\sqrt{\sqrt{3} + 3}}{\sqrt{3}} & 0 & 0 & 1
\end{array}\right)
$$

We have that $k_{12} = \Q(\sqrt{3})$ and $$K_{12} := K(T_{12}) = \Q\left(\sqrt{2}, \sqrt{3}, \sqrt{3+\sqrt{3}}\right) = \Q\left(\sqrt{2}, \sqrt{3+\sqrt{3}}\right).$$ The latter field is of degree $8$ with all embeddings given by
$$\sqrt{2} \to \pm \sqrt{2}, \quad \sqrt{3+\sqrt{3}} \to \pm \sqrt{3 \pm \sqrt{3}}.$$ 
We verified via Sage that for each\footnote{In fact, since the semi-definiteness property can be checked via principal minors (and since these minors are computed using cyclic products contained in $k_{12} = \Q(\sqrt{3})$), it suffices to consider only a single embedding $\sigma \colon K_{12} \to \R$ mapping $\sqrt{3}$ to $-\sqrt{3}$.} embedding $\sigma: K_{12} \to \R$ such that $\sigma \mid_{k_{12}} \ne 1$, the matrix $G_{12}^\sigma$ has signature $(4,0,2)$.

Finally, we observe that not all cyclic products of $2 \cdot G_{12}$ are algebraic integers. For instance, we have that $\left(2\frac{\sqrt{\sqrt{3} + 3}}{\sqrt{3}}\right)^2 = \frac{4(3+\sqrt{3})}{3} \not\in \mathbb{Z}[\sqrt{3}] = \OOO_{k_{12}}$. 
\end{proof}

\subsection{Systoles of L\"obell orbifolds}\label{sec:systoles}

For a lattice $\Gamma < \mathrm{Isom}(\mathbb{H}^d)$, the {\em systole} $\mathrm{sys}(\Gamma)$ of $\Gamma$ is the minimal translation length of a loxodromic element of $\Gamma$. The {\em systole} $\mathrm{sys}(M)$ of the complete finite-volume hyperbolic orbifold $M = \mathbb{H}^d / \Gamma$ is simply the systole of the lattice $\Gamma$. We record in this section a couple of remarks about systoles of hyperbolic reflection orbifolds.

\begin{remark}\label{nonarithmetic}
Suppose we have a sequence of reflection groups $\Gamma_n < \mathrm{Isom}(\mathbb{H}^d)$, $d \geq 2$, with the property that $\mathrm{sys}(\Gamma_n) \rightarrow 0$. Then $\Gamma_n$ is arithmetic for at most finitely many $n$. Indeed, suppose otherwise, so that we may assume the $\Gamma_n$ are all arithmetic. Since there are only finitely many maximal arithmetic reflection groups (see \cite{MR2477273}, \cite{MR2442945}, and \cite{fisher2022new}), up to further extraction, we may also assume the $\Gamma_n$ are all contained in a single lattice $\Gamma < \mathrm{Isom}(\mathbb{H}^d)$, so that $\mathrm{sys}(\Gamma_n) \geq \mathrm{sys}(\Gamma)$, a contradiction. 
\end{remark}

\begin{remark}\label{rem:systolecommensurability}
It follows from Remark \ref{nonarithmetic} that if $\Gamma_n < \mathrm{Isom}(\mathbb{H}^d)$, $d \geq 2$, is a sequence of finite-covolume reflection groups satisfying $\mathrm{sys}(\Gamma_n) \rightarrow 0$, then for each $m \in \mathbb{N}$, we have that $\Gamma_m$ is commensurable to $\Gamma_n$ for at most finitely many $n$. Indeed, suppose otherwise. Then, up to passing to a subsequence, we may assume that the $\Gamma_n$ are all commensurable. Since the $\Gamma_n$ are nonarithmetic by Remark \ref{nonarithmetic}, it follows from a result of Margulis \cite[Theorem~1,~page 2]{MR1090825} that their commensurator $\Lambda < \mathrm{Isom}(\mathbb{H}^d)$ contains each $\Gamma_n$ as a finite-index subgroup, so that $\mathrm{sys}(\Gamma_n) \geq \mathrm{sys}(\Lambda)$, a contradiction.

As observed in \cite[Sections~5.2~and~5.3]{MR2821431}, the above conclusion in fact holds for any sequence of lattices $\Gamma_n < \mathrm{Isom}(\mathbb{H}^d)$ satisfying $\mathrm{sys}(\Gamma_n) \rightarrow 0$. Indeed, suppose one has such a sequence $\Gamma_n$ where the $\Gamma_n$ are all commensurable. If the $\Gamma_n$ are nonarithmetic, then one obtains a contradiction as in the previous paragraph. If the $\Gamma_n$ are arithmetic, then since they are commensurable, a uniform lower bound for $\mathrm{sys}(\Gamma_n)$ is provided by Lehmer's conjecture for integer polynomials of bounded degree \cite{MR296021} (see the discussion immediately following Conjecture 10.2 in Gelander \cite{MR2084613}). We remark that the strongest form of Lehmer's conjecture would imply a uniform lower bound on the systole of any arithmetic locally symmetric orbifold.
\end{remark}

\section{L\"obell orbifolds and hyperbolic Dehn fillings of ideal right-angled antiprisms}\label{sec:dehn}

Let $P \subset \mathbb{H}^3$ be a finite-volume Coxeter polyhedron with a compact edge $e$, and say the dihedral angle at $e$ is $\frac{\pi}{m}$. Following Kolpakov \cite{MR2950475}, if $Q \subset \mathbb{H}^3$ is a finite-volume Coxeter polyhedron of the same combinatorial type and with the same dihedral angles as $P$ except that the dihedral angle at $e$ in $Q$ is diminished to $\frac{\pi}{n}$ for some $n \geq m$, we say that $Q$ is obtained from $P$ via a {\em $\frac{\pi}{n}$-contraction} at $e$. For example, for $n \geq 5$ and $k \geq 2$, the polyhedron $P_{n,k} \subset \mathbb{H}^3$ whose Coxeter--Vinberg diagram is shown in Figure \ref{fig:CV-diagram-2} is obtained from the truncated tetrahedron $T_n$ via $\frac{\pi}{2k}$-contractions at the (analogues of the) edges $B'D$ and $EC$, while $T_n$ is in turn obtained from $T_5$ via a $\frac{\pi}{n}$-contraction at the edge $AA'$; see Figures \ref{fig:Lobell} and \ref{fig:CV-diagram-T6}.

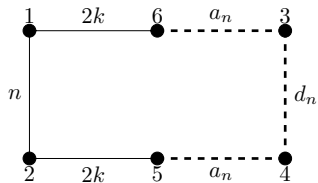
\begin{figure}[ht]
\centering
\scalebox{0.85}{\begin{tikzpicture}
\draw[fill=black] 
 node [right] {}
(2,2) circle [radius=.1] node [above] {$6$}
(0,0) circle [radius=.1] node [below] {$2$}
(2,0) circle [radius=.1] node [below] {$5$}
(0,2) circle [radius=.1] node [above] {$1$}
(4,2) circle [radius=.1] node [above] {$3$}
(4,0) circle [radius=.1] node [below] {$4$}

(0,0) --  node [left] {$n$} (0,2)

(2,0) -- node [below] {$2k$} (0,0)

(0,2) -- node [above] {$2k$} (2,2)

;

\draw[dashed, very thick] (2,2) -- node [above] {$a_n$} (4,2);

\draw[dashed, very thick] (2,0) -- node [below] {$a_n$} (4,0);

\draw[dashed, very thick] (4,2) -- node [right] {$d_n$} (4,0);
\end{tikzpicture}}
\caption{The Coxeter--Vinberg diagram of the polyhedron $P_{n,k}$}\label{fig:CV-diagram-2}
\end{figure}

If instead $Q$ and $P$ differ (as labeled polyhedra) only in that, in $Q$, the edge $e$ is replaced by an ideal vertex $v \in \partial \mathbb{H}^3$, then we say that $Q$ is obtained from $P$ by {\em contracting $e$ to an ideal vertex}. 
Note that, if such a contraction exists, the dihedral angle at each edge adjacent to $e$ in $P$ (and each edge incident to $v$ in $Q$) is $\frac{\pi}{2}$, and the faces sharing $e$ in $P$ are at least $4$-sided. 

\begin{figure}
    \centering
    \includegraphics[scale = 0.3]{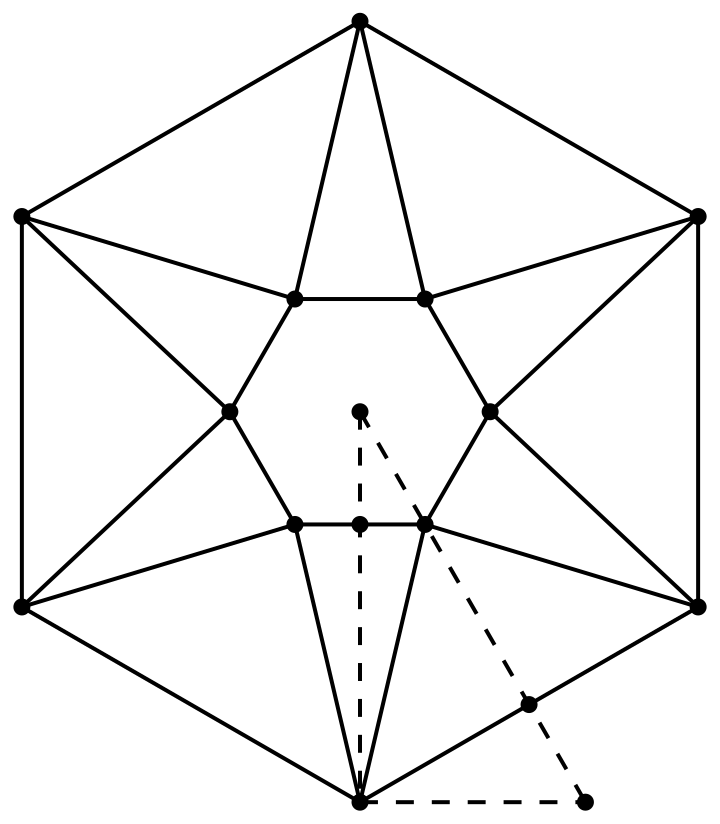} \quad
    \includegraphics[scale = 0.23]{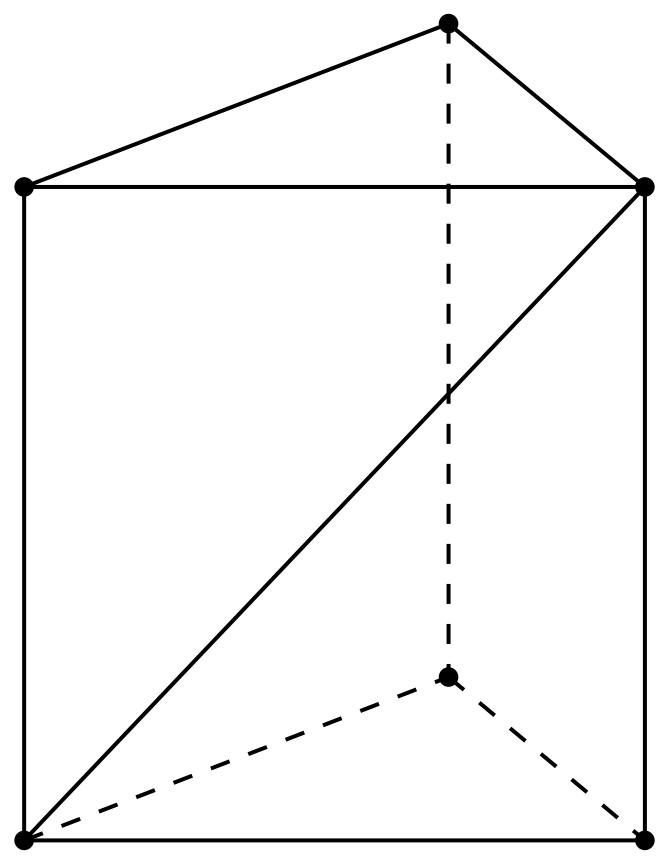}
    \caption{The antiprism $A_6$ and its slice $R_6$.}
    \label{fig:Antiprism}
\end{figure}

\begin{figure}
    \centering
    \includegraphics[scale=0.275]{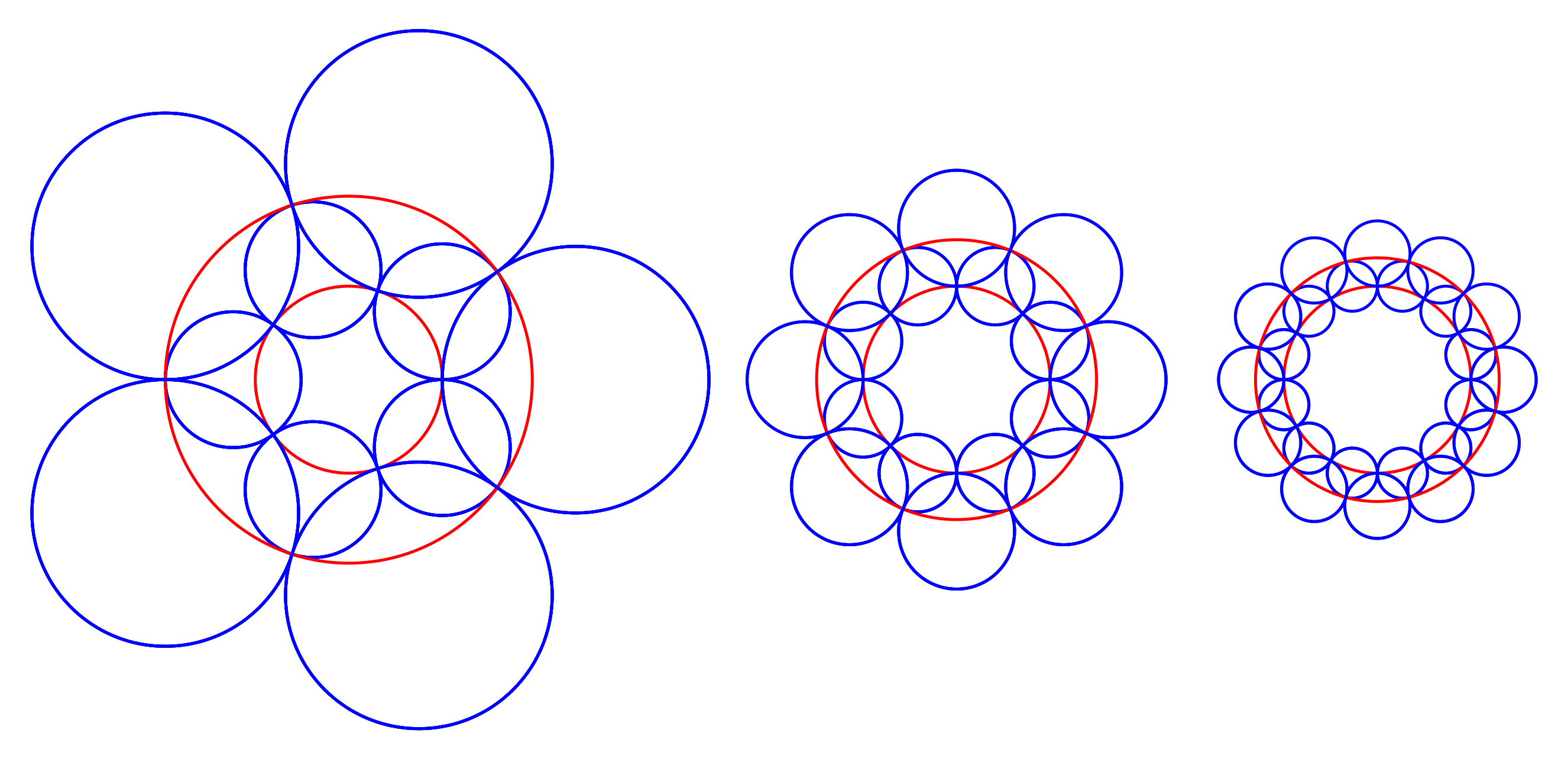}
    \caption{A visual proof that the distance between the ``top'' and ``bottom'' faces of the antiprism $A_n$ approaches $0$. Drawn above are the ideal boundaries of the walls of $A_n$ for $n=5$, $8$, and $12$ (from left to right), visualized via stereographic projection onto the page from the ideal boundary of $\HH^3$ such that the circles corresponding to the top and bottom faces, indicated here in red, are concentric. Keeping fixed the Euclidean diameter of the inner red circle, we have that as $n$ approaches $\infty$, the diameter of an inner blue circle approaches $0$, so that the ratio between the diameters of the inner and outer red circles approaches $1$. In fact, an elementary exercise in Euclidean geometry shows that this ratio is precisely $\frac{1+\sin(\frac{\pi}{n})}{\cos(\frac{\pi}{n})}$, corroborating a computation of Kellerhals \cite[Examples~1~\&~2]{kellerhals2022polyhedral}.}
    \label{fig:circles}
\end{figure}

 In \cite{kellerhals2022polyhedral}, Kellerhals studies a family of ideal right-angled polyhedra known as the antiprisms $A_n \subset \mathbb{H}^3$, $n \geq 3$, where she exploits a decomposition of each such polyhedron $A_n$  into $2n$ copies of a polyhedron $R_n$, analogous to the decomposition of the L\"obell polyhedron $L_n$ into $2n$ copies of $T_n$; see Figure \ref{fig:Antiprism}. In the above language, for $n \geq 5$, the polyhedron $R_n$ is in fact obtained from $T_n$ by contracting  the edges $B'D$ and $EC$ of $T_n$ to ideal vertices. In particular, for such $n$, the antiprism $A_n$ may be obtained from the L\"obell polyhedron $L_n$ by a sequence of edge contractions to ideal vertices. This was already observed by Kolpakov \cite[Section~5.1]{MR2950475}. Indeed, the existence of the polyhedra $P_{n,k}$ and $R_n$, $n \geq 5$, $k \geq 2$, is predicted by the following result of Kolpakov, whose proof rests on Andreev's theorem \cite{And70,MR2336832}.

\begin{theorem}[c.f.~the~proof~of~Prop.~1 ~in~\cite{MR2950475}]\label{contraction}
    Let $P \subset \HH^3$ be a finite-volume Coxeter polyhedron with at least $5$ faces and a compact edge $e$ such that the dihedral angle at each edge adjacent to $e$ is $\frac{\pi}{2}$, and the faces sharing $e$ in $P$ are at least $4$-sided. If the dihedral angle at $e$ in $P$ is $\frac{\pi}{m}$, $m \geq 2$, then $P$ admits a $\frac{\pi}{n}$-contraction $P_n$ at $e$ for each $n \geq m$, as well as a contraction $P_\infty$ of $e$ to an ideal vertex. 
\end{theorem}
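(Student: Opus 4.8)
The plan is to deduce everything from Andreev's theorem \cite{And70,MR2336832}, in both its compact and its finite-volume forms. Recall that Andreev's theorem realizes a combinatorial polyhedron $C$ with prescribed dihedral angles $\theta\colon E(C)\to(0,\pi/2]$ precisely when two families of conditions hold: \emph{lower} bounds on the angle sums around each vertex, and \emph{upper} bounds on the angle sums along each prismatic circuit of faces (together with an exceptional condition when $C$ is a combinatorial triangular prism). Since $P$ itself realizes $(C,\theta)$, these conditions hold for the original data, and the whole problem reduces to checking that they survive when the single angle $\theta_e$ is lowered from $\pi/m$ toward $0$. Write $F_1,F_2$ for the two faces along $e$, write $v_1,v_2$ for its endpoints, and let $e_1,e_2$ (resp.\ $e_3,e_4$) be the other two edges at $v_1$ (resp.\ $v_2$); by hypothesis each $e_i$ carries the angle $\pi/2$ and each of $F_1,F_2$ is at least $4$-sided.

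For finite $n\ge m$ I would keep $C$ fixed and reset $\theta_e:=\pi/n\le\pi/m$, leaving all other angles untouched. Because $C$ is unchanged, the vertices and prismatic circuits are exactly those of $P$, so only the conditions meeting $e$ require attention. The sole vertex conditions involving $e$ are at $v_1$ and $v_2$, where the angle sum equals $\pi/n+\tfrac{\pi}{2}+\tfrac{\pi}{2}=\pi+\pi/n>\pi$---this is precisely where the right angles along $e_1,\dots,e_4$ enter. Every remaining condition is an upper bound on a sum of angles, which can only decrease when $\theta_e$ does, and so persists. Andreev's theorem (compact case) then produces the compact Coxeter polyhedron $P_n$.

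For $P_\infty$ I would send $\theta_e\to 0$. As the dihedral angle along $e$ tends to $0$ the planes carrying $F_1,F_2$ become asymptotically parallel, their intersection recedes to a single ideal point, and both endpoints $v_1,v_2$ converge to one ideal vertex $v$; combinatorially this is the contraction $C_\infty$ that deletes $e$, fuses $v_1,v_2$ into a $4$-valent ideal vertex $v$ incident to $e_1,\dots,e_4$, and shortens each of $F_1,F_2$ by one side. The hypothesis that $F_1,F_2$ are at least $4$-sided is exactly what keeps them genuine polygons in $C_\infty$. I would then realize $P_\infty$ by applying the finite-volume Andreev theorem to $(C_\infty,\theta)$, the new requirement being that the link of $v$ be Euclidean, i.e.\ that its four angles sum to $2\pi$---which holds since $e_1,\dots,e_4$ are right angles. (Alternatively, one may obtain $P_\infty$ as the geometric limit of the $P_n$: the remaining angles are fixed, so only $e$ degenerates, volumes stay bounded, and a convergent subsequence exists whose limit is the contraction.)

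The step I expect to be the main obstacle is the bookkeeping of prismatic circuits for $C_\infty$, since deleting $e$ alters the face adjacencies. The delicate configuration is the $4$-circuit $F_1,F_3,F_2,F_4$, whose edges $e_1,e_2,e_3,e_4$ all carry right angles and hence sum to exactly $2\pi$: as a prismatic $4$-circuit this would \emph{violate} the strict bound $<2\pi$. The resolution is that it is never prismatic. In $P$ this circuit encircles the compact edge $e$ (its interior disk contains $e$ and the vertices $v_1,v_2$), so it is disqualified; in $C_\infty$ the same circuit is exactly the link of the ideal vertex $v$, so the equality ``sum $=2\pi$'' is now the satisfied condition for $v$ to be ideal and $4$-valent. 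In both regimes this configuration is thus governed by a vertex condition rather than by the prismatic-circuit bound. Once one checks that every other prismatic circuit of $C_\infty$ is already a prismatic circuit of $C$ with the same (unaltered) angle sum, all of Andreev's finite-volume conditions are met and $P_\infty$ exists; the limiting construction offers an alternative that trades this combinatorial analysis for a short analytic convergence argument.
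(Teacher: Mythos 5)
Your argument is correct and follows essentially the same route as the proof the paper defers to, namely Kolpakov's application of Andreev's theorem: only the conditions at the endpoints of $e$ involve $\theta_e$, and these are lower bounds saved by the right angles on the four adjacent edges, while every other affected condition (prismatic circuits, and the link of the new ideal vertex for $P_\infty$) is an upper bound or an equality that survives. The one cosmetic slip is that $P$ is only assumed finite-volume, so you should invoke the finite-volume form of Andreev's theorem for the $P_n$ as well; the additional ideal-vertex conditions elsewhere are equalities not involving $\theta_e$ and are unaffected.
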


Since we may instead view the $P_n$ as having been obtained from $P_\infty$ via Dehn filling\footnote{More precisely, Dunbar and Meyerhoff adapt Thurston's theory of hyperbolic Dehn fillings to the setting of {\em oriented} hyperbolic 3-orbifolds. Applying this theory to the orientation covers of the $P_n$, where the $P_n$ are viewed as reflection orbifolds, one concludes that the length of the edge $e$ in $P_n$ approaches $0$ as $n \rightarrow \infty$.}, it follows from work of Dunbar and Meyerhoff \cite{MR1291531} that the length of the (analogue of the) edge $e \subset P_n$ approaches $0$ as $n \rightarrow \infty$. Taking $P_n$ to be the truncated tetrahedron $T_n$ and $e$ the edge $AA'$, we obtain another justification for Observation \ref{ob:topandbottom}. Applying a similar argument to the $R_n$ instead of the $T_n$, one concludes that the distance between the ``top" and ``bottom" faces of the antiprism $A_n$ also approaches $0$ as $n \rightarrow \infty$; for a justification of the latter that uses only elementary Euclidean geometry, see Figure \ref{fig:circles}.

We remark that the conditions of Theorem \ref{contraction} are satisfied for any compact edge $e$ of a finite-volume right-angled polyhedron $P \subset \mathbb{H}^3$. This suggests a method of constructing a family of new finite-volume right-angled polyhedra starting from the polyhedron $P$. Namely, for $n \geq 2$, let $P_n \subset \mathbb{H}^3$ be the polyhedron obtained from $P$ by a $\frac{\pi}{n}$-contraction at $e$, let $\Lambda_n < \mathrm{Isom}(\mathbb{H}^3)$ be the associated reflection group, and let $\Delta_n$ be the stabilizer of $e$ in $\Lambda_n$. Then the finite-volume polyhedron $Q_n = \bigcup_{\gamma \in \Delta_n}\gamma P_n$ is right-angled, and is compact if and only if $P$ was. Moreover, for each $m \geq 2$, we have that $\Lambda_m$ is commensurable to $\Lambda_n$ for at most finitely many $n \geq 2$, for instance, because $\mathrm{sys}(\Lambda_n) \rightarrow 0$ as $n \rightarrow \infty$ (see Remark \ref{rem:systolecommensurability}). We conclude our discussion by observing that this strategy for producing an infinite family of pairwise incommensurable finite-volume right-angled hyperbolic polyhedra fails in dimension $4$, where the existence of such a family appears to be open. 

\begin{theorem}\label{thm:dim4}
    Let $n,m \geq 3$ be of the same parity and $d \geq 4$. Suppose $P_n \subset \HH^d$ is a finite-volume Coxeter polyhedron all of whose dihedral angles are right angles except for a single dihedral angle of $\frac{\pi}{n}$. Suppose there is also a finite-volume Coxeter polyhedron $P_m \subset \HH^d$ with the same combinatorics and dihedral angles as $P_n$ except that the exceptional dihedral angle of $P_m$ is $\frac{\pi}{m}$. Then $n=m$.
\end{theorem}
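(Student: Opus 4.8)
The plan is to induct on the dimension $d$, reducing the general case to $d = 4$, and then to extract from the rigidity of the right-angled walls of $P_n$ an algebraic identity relating $\cos\frac{2\pi}{n}$ and $\cos\frac{2\pi}{m}$ that is resolved by the parity hypothesis.

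\textbf{Reduction to $d = 4$.} Let $e = F_1 \cap F_2$ be the exceptional ridge (codimension-$2$ face), where $F_1, F_2$ are the two walls meeting at angle $\frac{\pi}{n}$. I would first observe that each wall $F_k$ of $P_n$ that meets $e$ but is distinct from $F_1, F_2$ is itself a finite-volume Coxeter polyhedron in a copy of $\HH^{d-1}$, all of whose dihedral angles are right except for a single one equal to $\frac{\pi}{n}$: indeed, since $F_k$ is orthogonal to both $F_1$ and $F_2$, the dihedral angle of $F_k$ along the ridge $F_k \cap e$ equals the angle between $F_1$ and $F_2$, namely $\frac{\pi}{n}$, while every other ridge of $F_k$ carries a right angle because $e$ is the unique non-right codimension-$2$ face of $P_n$. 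When $d \geq 5$, such a wall $F_k$ exists (as $e$ is a polyhedron of dimension $d-2 \geq 3$, hence has facets), is $(d-1)$-dimensional with $d - 1 \geq 4$, and has the same combinatorial type and dihedral-angle data as the corresponding wall of $P_m$ save for its exceptional angle $\frac{\pi}{m}$. The inductive hypothesis applied to this wall then yields $n = m$.

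\textbf{The base case $d = 4$.} Here the walls meeting $e$ are $3$-dimensional and of the same flexible type as the L\"obell slices $T_n$, so the above reduction stalls; this is the crux of the argument. The walls $F_1$ and $F_2$, together with all walls of $P_n$ disjoint from $e$, are right-angled $3$-dimensional polyhedra, and hence are rigid by Andreev's theorem \cite{And70, MR2336832}: their isometry types are determined by the (shared) combinatorial type of $P_n$. I would use this to show that, after normalizing the positions of the outward normals $e_1, \dots, e_N$, every entry of the Gram matrix $G_n = ((e_i, e_j))$ is a fixed real number independent of $n$ except for those controlled by the single parameter $\cos\frac{\pi}{n}$, and that the rank condition $\rk G_n = d+1 = 5$ forces a polynomial relation, over the fixed field generated by the rigid right-angled data, satisfied by $\cos\frac{\pi}{n}$. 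The reconstruction of $P_n$ from its rigid walls involves a reflection (orientation) ambiguity in gluing the two rigid halves across $e$, and consequently pins down only the ground-field invariant $\cos^2\frac{2\pi}{n} \in \Q(\Cyc(G_n))$ rather than its sign; carrying this out for both $P_n$ and $P_m$ gives
\[
\cos^2\frac{2\pi}{n} = \cos^2\frac{2\pi}{m}.
\]
Establishing this identity rigorously --- in particular, controlling the entries $(e_i, e_j) \leq -1$ recording distances between disjoint rigid walls and showing that they contribute no further freedom --- is the step I expect to be the main obstacle.

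\textbf{The number-theoretic endgame.} The displayed identity gives $\cos\frac{2\pi}{n} = \pm\cos\frac{2\pi}{m}$. In the $+$ case, since $\frac{2\pi}{n}, \frac{2\pi}{m} \in (0, \pi)$ and $\cos$ is injective there, we get $n = m$. In the $-$ case, $\cos\frac{2\pi}{n} = \cos\!\left(\pi - \frac{2\pi}{m}\right)$ gives $\frac{2\pi}{n} = \pi - \frac{2\pi}{m}$, i.e.\ $\frac1n + \frac1m = \frac12$, whose only solutions with $n, m \geq 3$ are $(n,m) \in \{(3,6), (4,4), (6,3)\}$. The pairs $(3,6)$ and $(6,3)$ consist of integers of opposite parity and are excluded by hypothesis, leaving only $n = m = 4$. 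In all cases $n = m$, as desired.
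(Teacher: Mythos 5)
Your reduction from $d \geq 5$ to $d = 4$ is valid (a wall $F_k \neq F_1, F_2$ meeting the exceptional ridge $e$ inherits exactly one non-right dihedral angle, equal to $\frac{\pi}{n}$, since $F_k$ is orthogonal to all its neighbours), and your number-theoretic endgame is correct \emph{granting} the identity $\cos^2\frac{2\pi}{n} = \cos^2\frac{2\pi}{m}$. But that identity is precisely the content of the theorem, and your base case does not establish it --- you say yourself that it is ``the step I expect to be the main obstacle.'' The heuristic offered does not close the gap: Andreev rigidity of the individual $3$-dimensional right-angled walls of $P_n$ says nothing about how those walls are positioned relative to one another in $\HH^4$; in particular the Gram matrix entries $(e_i,e_j)$ with $|(e_i,e_j)| > 1$, which record distances between divergent walls, are not determined by the combinatorics-plus-angles of the walls, and the assertion that the rank condition $\rk G_n = 5$ leaves only a sign ambiguity in $\cos\frac{2\pi}{n}$ is unsupported (a priori the rank condition cuts out a positive-dimensional variety, or a finite set with no reason to consist of two points related by a sign). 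So the proposal as written is a reduction of the theorem to itself in dimension $4$, not a proof.

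The paper's argument is genuinely different and is worth comparing. It never tries to pin down the Gram matrix. Instead, for $k = n, m$ it forms the ``lens'' $R_k = \bigcup_{\gamma \in D_k} \gamma P_k$, where $D_k$ is the dihedral group generated by the reflections in the two exceptional walls; the parity hypothesis is used to choose reflections $r_k \in D_k$ whose fixed half-slices $\mathrm{Fix}(r_k) \cap R_k$ have matching combinatorics and angles, hence are isometric by Mostow--Prasad rigidity. Interbreeding $R_n$ with $R_m$ along these half-slices produces a finite-volume polyhedron $R$ with a combinatorial dihedral symmetry group $D_{\frac{n+m}{2}}$, which Mostow--Prasad rigidity upgrades to isometries. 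The key rigidity input is Lemma~\ref{lem:zdense}: the reflection group in all walls of $P_k$ \emph{except} the two exceptional ones is already Zariski-dense in $\mathrm{Isom}(\HH^d)$ (this is where $d \geq 4$ enters, via the $(d-1 \geq 3)$-dimensional wall argument of Gromov--Piatetski-Shapiro), so the rotation $\gamma_k$ about the ridge, of order exactly $k$, is the \emph{unique} isometry carrying one slice of $R_k$ two slices over; since $\gamma_n$ and $\gamma_m$ both generate the rotation subgroup of the symmetry group of $R$, they have equal order. Note also that the parity hypothesis plays a structurally different role in the two arguments: in the paper it is needed to make the half-slices isometric before interbreeding, whereas in your sketch it only appears at the very end to exclude the spurious solution $(n,m) = (3,6)$. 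If you want to salvage a Gram-matrix approach, the missing ingredient is exactly a rigidity statement like Lemma~\ref{lem:zdense} for the complement of the two exceptional walls, not rigidity of the walls one at a time.
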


\noindent We will make use of the following lemma.

\begin{lemma}\label{lem:zdense}
    Let $P$ be a finite-volume Coxeter polyhedron in $\HH^d$, $d \geq 4$, and suppose all dihedral angles of $P$ are right angles except possibly for one dihedral angle formed by walls $H_1$ and $H_2$ of $P$. Then the group $\Gamma < \mathrm{Isom}(\HH^d)$ generated by the reflections in all walls of $P$ except $H_1$ and $H_2$ is Zariski-dense in $\mathrm{Isom}(\HH^d)$.
\end{lemma}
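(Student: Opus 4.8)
The plan is to prove that $\Gamma$ acts irreducibly on $\R^{d,1}$ and then to upgrade irreducibility to Zariski density using the structure of $\mathrm{O}'_{d,1}$. Write $I$ for the set of walls of $P$ other than $H_1$ and $H_2$, and let $e_i$ denote the spacelike unit normal to the wall $H_i$, so that the corresponding reflection is $r_i \colon x \mapsto x - 2(x,e_i)e_i$. The starting observation is that a linear subspace $W \subseteq \R^{d,1}$ is $r_i$-invariant if and only if $e_i \in W$ or $e_i \in W^\perp$. Consequently, if $\Gamma = \langle r_i : i \in I \rangle$ preserved a proper nonzero subspace $W$, then each normal $e_i$, $i \in I$, would lie in $W$ or in $W^\perp$, and any two normals with $(e_i,e_j) \neq 0$ would necessarily lie on the same side of this partition. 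Letting $G_I$ be the graph on vertex set $I$ with an edge between $i$ and $j$ exactly when $(e_i,e_j) \neq 0$, I would conclude that $\Gamma$ acts irreducibly as soon as (a) the vectors $\{e_i\}_{i \in I}$ span $\R^{d,1}$, and (b) the graph $G_I$ is connected.

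The content of the lemma is therefore to verify (a) and (b), and this is the step I expect to be the main obstacle. Both properties hold for the full set of walls of $P$: since $P$ has finite volume its wall normals span $\R^{d,1}$, and since a finite-covolume reflection group in $\mathrm{O}'_{d,1}$ is irreducible (its Gram matrix is indecomposable), the non-orthogonality graph on all walls is connected. The point to establish is that deleting the two walls $H_1, H_2$ destroys neither property, and here the hypotheses are essential. Because every dihedral angle of $P$ apart from the one between $H_1$ and $H_2$ is right, two walls fail to be orthogonal essentially only when they are disjoint (ultraparallel), in which case $(e_i,e_j) \leq -1$; thus $G_I$ is in fact very dense, which should make its connectivity robust under the removal of two vertices. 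For the spanning, I would argue that if $U := \mathrm{span}\{e_i\}_{i \in I}$ were proper, then $\Gamma$ would fix the nonzero subspace $U^\perp$ pointwise, forcing all walls indexed by $I$ to pass through a common proper totally geodesic subspace (or to share a common polar point); this is incompatible with $P$ having finite volume once $d \geq 4$, since the $d$ mutually orthogonal walls meeting at an ordinary vertex already span a hyperplane. Making these two assertions precise, in particular controlling the incidence combinatorics after $H_1$ and $H_2$ are removed, is where the real work lies.

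Granting (a) and (b), $\Gamma$ acts irreducibly on $\R^{d,1}$, and it remains to deduce Zariski density. Since $\Gamma$ preserves the form $f$ and acts irreducibly, its Zariski closure $H$ is reductive. It cannot be compact, because a compact subgroup of $\mathrm{O}_{d,1}$ fixes a point of $\HH^d$, hence a timelike vector, and so acts reducibly. Thus $H^\circ$ is a noncompact reductive subgroup of the simple group $\mathrm{SO}_{d,1}^\circ = \mathrm{Isom}(\HH^d)^\circ$ acting irreducibly on $\R^{d,1}$. To rule out that $H^\circ$ is proper, I would note that the invariant form has signature $(d,1)$, so its negative part is one-dimensional, and that the signature of a tensor product of two symmetric forms has a one-dimensional negative part only when one factor is trivial; hence the irreducible representation does not decompose as a nontrivial tensor product, $H^\circ$ is simple modulo its center, and for $d+1 \geq 4$ the only such subgroup of $\mathrm{SO}_{d,1}^\circ$ carrying a faithful irreducible representation of Lorentzian signature is $\mathrm{SO}_{d,1}^\circ$ itself. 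Alternatively, and more directly, $\Gamma$ is discrete (being a subgroup of the reflection group of $P$), so acting irreducibly means it preserves no proper totally geodesic subspace of $\HH^d$ and fixes no ideal point; a discrete subgroup of $\mathrm{Isom}(\HH^d)$ with this property is Zariski dense, since the Zariski closure of a discrete subgroup of a rank-one simple Lie group is, up to finite index, the isometry group of the smallest totally geodesic subspace containing its limit set. Either way one obtains $\overline{\Gamma}^{\,Z} \supseteq \mathrm{Isom}(\HH^d)^\circ$, so $\Gamma$ is Zariski dense.
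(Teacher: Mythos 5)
Your reduction of the lemma to the two statements (a) the normals $\{e_i\}_{i \in I}$ span $\R^{d,1}$ and (b) the non-orthogonality graph $G_I$ is connected, and your subsequent upgrade from irreducibility to Zariski density (the second, limit-set version of that argument is the clean one), are both sound in outline. But the actual content of the lemma is precisely the verification of (a) and (b) after the two walls $H_1, H_2$ have been deleted, and you explicitly leave this unproved, flagging it as ``where the real work lies.'' That is a genuine gap, not a detail: for (a), the natural argument via a finite vertex $v$ of $P$ (whose $d$ wall normals span a positive-definite $d$-plane, with any wall missing $v$ contributing a normal outside that span) breaks down if every finite vertex of $P$ lies on $H_1$ or $H_2$, or if $P$ has only ideal vertices, and you give no mechanism for handling this; for (b), one must rule out, e.g., a wall in $I$ that is orthogonal to every other wall in $I$ and hence isolated in $G_I$, and ``the graph is very dense, so connectivity should be robust'' is a heuristic, not an argument. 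Note also that the hypothesis $d \geq 4$ must enter somewhere (the statement is false for $d = 2$, e.g.\ a right-angled pentagon minus two adjacent walls leaves three mutually orthogonal walls generating a finite group), and your sketch does not make clear where it is used, which is a warning sign that the missing combinatorial step is where the dimension hypothesis does its work.

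For comparison, the paper avoids this combinatorial analysis entirely by a restriction trick: since every wall of $P$ other than $H_2$ meets $H_1$ (if at all) orthogonally, intersecting with the hyperplane $H_1$ turns the configuration into a finite-volume right-angled Coxeter polyhedron $P \cap H_1$ in $\HH^{d-1}$ with a single wall (namely $H_1 \cap H_2$) forgotten; since $d-1 \geq 3$, the reflection group of a finite-volume Coxeter polyhedron minus one wall is Zariski-dense in $\mathrm{Isom}(H_1)$ by the Gromov--Piatetski-Shapiro interbreeding lemma, and doing this for both $H_1$ and $H_2$ yields density in $\mathrm{Isom}(\HH^d)$. If you want to salvage your approach, you could similarly try to prove (a) and (b) by induction on the walls $P \cap H_i$, but as written the proposal does not prove the lemma.
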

\begin{proof}[Proof~of~Lemma~\ref{lem:zdense}]
    Let $P' \subset \mathbb{H}^d$ be the (infinite-volume) polyhedron obtained from $P$ by forgetting the walls $H_1$ and $H_2$, and let $P_i$ be the intersection of $P'$ with the hyperplane $H_i$ of $\HH^d$ for $i =1,2$. Then the $(d-1)$-dimensional right-angled hyperbolic polyhedron $P_i$ is obtained from a finite-volume such polyhedron---namely, the polyhedron $P \cap H_i$---by forgetting a single wall---namely, the intersection  $H_1 \cap H_2$. Since $d-1 \geq 3$, it follows that the subgroup of $\mathrm{Isom}(H_i)$ generated by the reflections in the walls of $P_i$ is Zariski-dense in $\mathrm{Isom}(H_i)$; see, for instance, \cite[Section~1.7]{GPS}. The lemma follows.
\end{proof}

\begin{proof}[Proof~of~Theorem~\ref{thm:dim4}]
    For $k=n,m$, let $H_k$ and $H_k'$ be the walls of $P_k$ forming the exceptional dihedral angle of $\frac{\pi}{k}$, and let $R_k$ be the union of the images of $P_k$ under the reflection group $D_k$ generated by the reflections in $H_k$ and $H_k'$. Since $n$ and $m$ have the same parity, we may choose reflections $r_k \in D_k$ such that the $(d-1)$-dimensional polyhedra $\mathrm{Fix}(r_n) \cap R_n$ and $\mathrm{Fix}(r_m) \cap R_m$ have the same combinatorics and dihedral angles and are thus isometric by Mostow--Prasad  rigidity \cite{mostow, prasad} (see also Andreev \cite{And70,And70b}).

    Now interbreed the $R_k$ along $\mathrm{Fix}(r_k) \cap R_k$ and let $R$ be the resulting finite-volume polyhedron in $\HH^d$. Then there is an obvious dihedral group $D_{\frac{n+m}{2}}$ of combinatorial symmetries of $R$ preserving dihedral angles. By Mostow--Prasad rigidity, each of these combinatorial symmetries is a hyperbolic isometry. However, by Lemma \ref{lem:zdense}, there is (up to inverses) a unique hyperbolic isometry $\gamma_k$ rotating the surface of any given slice of $R_k$ two slices over, namely, the composition of the reflections in $H_k$ and $H_k'$, so that $\gamma_k$ has order $k$. Since $\gamma_n$ and $\gamma_m$ each generate the commutator subgroup of $D_{\frac{n+m}{2}}$, it follows that $n=m$. 
\end{proof}

\bibliography{biblio.bib}{}
\bibliographystyle{siam}

\end{document}